\begin{document}
\baselineskip 16pt

\newcommand\RR{\mathbb{R}}
\def\RN {\mathbb{R}^n}
\newcommand{\norm}[1]{\left\Vert#1\right\Vert}
\newcommand{\abs}[1]{\left\vert#1\right\vert}
\newcommand{\set}[1]{\left\{#1\right\}}
\newcommand{\Real}{\mathbb{R}}
\newcommand{\R}{\mathbb{R}}
\newcommand{\supp}{\operatorname{supp}}
\newcommand{\card}{\operatorname{card}}
\renewcommand{\L}{\mathcal{L}}
\renewcommand{\P}{\mathcal{P}}
\newcommand{\T}{\mathcal{T}}
\newcommand{\A}{\mathbb{A}}
\newcommand{\K}{\mathcal{K}}
\renewcommand{\S}{\mathcal{S}}
\newcommand{\Id}{\operatorname{I}}
\newcommand\wrt{\,{\rm d}}
\newcommand\Ad{\,{\rm Ad}}

\def\SL{\sqrt[m]L}
\newcommand{\mar}[1]{{\marginpar{\sffamily{\scriptsize
        #1}}}}
\newcommand{\li}[1]{{\mar{LY:#1}}}
\newcommand{\el}[1]{{\mar{EM:#1}}}
\newcommand{\as}[1]{{\mar{AS:#1}}}
 \newcommand{\comment}[1]{\vskip.3cm
\fbox{%
\color{red}
\parbox{0.93\linewidth}{\footnotesize #1}}}

\newcommand\CC{\mathbb{C}}
\newcommand\NN{\mathbb{N}}
\newcommand\ZZ{\mathbb{Z}}

\renewcommand\Re{\operatorname{Re}}
\renewcommand\Im{\operatorname{Im}}

\newcommand{\mc}{\mathcal}
\newcommand\D{\mathcal{D}}

\newtheorem{thm}{Theorem}[section]
\newtheorem{prop}[thm]{Proposition}
\newtheorem{cor}[thm]{Corollary}
\newtheorem{lem}[thm]{Lemma}
\newtheorem{lemma}[thm]{Lemma}
\newtheorem{exams}[thm]{Examples}
\theoremstyle{definition}
\newtheorem{defn}[thm]{Definition}
\newtheorem{rem}[thm]{Remark}

\numberwithin{equation}{section}
\newcommand\bchi{{\chi}}

\title[Sharp endpoint $L^p$ estimates  for   Schr\"odinger  groups ]
{Sharp endpoint $L^p$ estimates  for   Schr\"odinger  groups  }

 \author[P. Chen, X.T. Duong,   J. Li  and L. Yan]{ Peng Chen, \, Xuan Thinh Duong,  \,  Ji Li \,  and \, Lixin Yan}

 \address{Peng Chen, Department of Mathematics, Sun Yat-sen
University, Guangzhou, 510275, P.R. China}
\email{chenpeng3@mail.sysu.edu.cn}

 \address{Xuan Thinh Duong, Department of Mathematics, Macquarie University, NSW 2109, Australia}
\email{xuan.duong@mq.edu.au}

\address{Ji Li, Department of Mathematics, Macquarie University, NSW, 2109, Australia}
\email{ji.li@mq.edu.au}

\address{Lixin Yan, Department of Mathematics, Sun Yat-sen (Zhongshan) University, Guangzhou, 510275, P.R. China}
\email{mcsylx@mail.sysu.edu.cn}

  \date{\today}
 \subjclass[2010]{42B37, 35J10,  47F05}
\keywords{ Sharp endpoint $L^p$ estimate, Schr\"odinger  group, generalized Gaussian estimates,
 Riesz means,  space of homogeneous type}

\begin{abstract}
Let $L$ be a non-negative self-adjoint operator acting on $L^2(X)$
where $X$ is a space of homogeneous type with a dimension $n$. Suppose that
the heat operator $e^{-tL}$
satisfies  the generalized   Gaussian $(p_0, p'_0)$-estimates of order $m$ for some $1\leq p_0 < 2$.  In this paper
 we prove {\it sharp} endpoint $L^p$-Sobolev bound  for the Schr\"odinger group $e^{itL}$,
  that is  for every  $p\in (p_0, p'_0)$ there exists a  constant $C=C(n,p)>0$ independent of $t$ such that
 \begin{eqnarray*}
 \left\| (I+L)^{-{s}}e^{itL} f\right\|_{p} \leq  C(1+|t|)^{s}\|f\|_{p}, \ \ \ t\in{\mathbb R}, \ \ \ s\geq n\big|{1\over  2}-{1\over  p}\big|.
\end{eqnarray*}

As a consequence, the above  estimate holds for all $1<p<\infty$ when the heat kernel of $L$ satisfies a  Gaussian upper bound.
  This    extends  classical  results due to Feffermann and Stein, and Miyachi for the Laplacian
on the Euclidean spaces ${\mathbb R}^n$. We also give an application to   obtain an endpoint estimate for $L^p$-boundedness of
the Riesz means of the solutions of the Schr\"odinger equations.
 \end{abstract}

\maketitle


\section{Introduction }
\setcounter{equation}{0}

\medskip

\noindent
 {\bf 1.1. \, Background.}\ \
Consider the Laplace operator $\Delta=-\sum_{i=1}^n\partial_{x_i}^2$ on the Euclidean space $\mathbb R^n$
and  the Schr\"odinger equation
 \begin{eqnarray*}\label{e1.00}
\left\{
\begin{array}{ll}
  i{\partial_t u } +\Delta u=0,\\[4pt]
 u|_{t=0}=f
\end{array}
\right.
\end{eqnarray*}
with initial data $f$. Its solution     can be written as
 \begin{eqnarray*}\label{e1.0}
 u(x, t)=e^{it\Delta} f(x)={1\over (2\pi)^n}\int_{{\mathbb R}^n} {\widehat f}(\xi) e^{i(  \langle x, \,  \xi\rangle +t|\xi|^2 )} d\xi
 \end{eqnarray*}
 where ${\widehat f}$ denotes the Fourier transform of $f$.
It is well-known that the operator $e^{it\Delta}$ acts boundedly on $L^p({\mathbb R}^n)$ only if $p=2$;
 see  H\"ormander \cite{H1}. For $p\not= 2, $
it was shown (see for example,  \cite{Br, La, Sj}) that for $s > n|{1/ 2}-{1/p}|$, the operator
 $e^{it\Delta}$ maps the Sobolev space $L^p_{2s}(\RN)$ into $L^p(\RN)$. Equivalently, this means that
$(I+\Delta)^{-s } e^{it\Delta}$ is bounded on $L^p(\RN)$, and   this is not the case if  $0<s<  n|{1/ 2}-{1/p}|$.
 The sharp endpoint  $L^p$-Sobolev estimate
 is due to   Miyachi (\cite{Mi1, Mi}), which  states that for
every $p\in (1, \infty)$,
\begin{eqnarray}\label{e1.1}
 \left\|  (1+\Delta)^{-s} e^{it\Delta} f\right\|_{L^p(\mathbb R^n)} \leq C  (1+|t|)^{s}\|f\|_{L^p (\mathbb R^n)},
  \ \ \ t\in{\mathbb R}, \ \ \ s= n\big|{1\over  2}-{1\over  p}\big|
\end{eqnarray}
for some positive constant $C=C(n,p)$ independent of $t$. The estimate \eqref{e1.1} is sharp in another way: the factor
$(1+|t|)^{s}$ can not be improved (see \cite[p. 169-170]{Mi1}).  See also Feffermann and Stein's work \cite{FS}.
These results and their generalizations were in fact results on multipliers and relied heavily on Fourier analysis.
See, for example,  Ouhabaz's monograph \cite[Chapter  7]{O}   for
 historical background and more study on the Schr\"odinger groups.

\smallskip

The purpose of this paper is  to establish such sharp endpoint $L^p$ estimate  \eqref{e1.1} for the operators $\big(e^{itL}\big)_{t\in {\mathbb R}}$
 for a large class of non-negative self-adjoint operators acting on $L^2(X)$ on a metric  measure space $X.$
Such an operator $L$ admits a  spectral resolution
\begin{eqnarray}\label{e01.1}
Lf=\int_0^{\infty}\lambda dE_L(\lambda) f, \ \ \ \ f\in L^2(X),
\end{eqnarray}
where  $E_L(\lambda)$ is the projection-valued measure supported on the spectrum of $L$.
The operator   $   e^{itL}$ is defined by
 \begin{equation}\label{e1.3}
 e^{itL}f =   \int_0^{\infty}    e^{it\lambda}dE_L(\lambda) f
   \end{equation}
 for $f\in L^2(X)$, and forms   the Schr\"odinger group.
   By the spectral theorem (\cite{Mc}),  the operator   $  e^{itL}$  is  continuous on $L^2(X)$.
It is interesting to  investigate $L^p$-mapping properties  for
 the Schr\"odinger group $ e^{itL}$   on $L^p(X)$ for some $p, 1\leq p\leq \infty.$

 As an application of our  sharp endpoint $L^p$ estimate  for   the Schr\"odinger group $ e^{itL}$,
we also aim to obtain an endpoint estimate for $L^p$-boundedness of the Riesz means  of the solutions of the
Schr\"odinger equations.


\medskip

\noindent
 {\bf 1.2. \, Assumptions and main results.}\ \
Throughout the paper we assume that  $X$ is a metric space, with distance function $d$,
and   $\mu$ is a nonnegative, Borel doubling measure on $X$.
We say that $(X, d, \mu)$ satisfies
 the doubling property (see Chapter 3, \cite{CW})
if there  exists a constant $C>0$ such that
\begin{eqnarray}\label{e2.1}
V(x,2r)\leq C V(x, r)\quad \forall\,r>0,\,x\in X.
\end{eqnarray}
 Note that the doubling property  implies the following
strong homogeneity property,
\begin{equation}
V(x, \lambda r)\leq C\lambda^n V(x,r)
\label{e2.2}
\end{equation}
 for some $C, n>0$ uniformly for all $\lambda\geq 1$ and $x\in X$.
In Euclidean space with Lebesgue measure, the parameter $n$ corresponds to
the dimension of the space.
 There also exist $c$ and
$D, 0\leq D\leq n$ such that
\begin{equation}
V(y,r)\leq c\bigg( 1+{d(x,y)\over r}\bigg )^D V(x,r)
\label{e200}
\end{equation}
uniformly for all $x,y\in {  X}$ and $r>0$. Indeed, the
property (\ref{e200}) with
$D=n$ is a direct consequence of triangle inequality of the metric
$d$ and the strong homogeneity property. In the cases of Euclidean spaces
${\mathbb R}^n$ and Lie groups of polynomial growth, $D$ can be
chosen to be $0$.

Consider a non-negative self-adjoint operator $L$  and numbers
  $m\geq 2$ and $ 1\leq p_0\leq 2 $.
  We say that   the semigroup $e^{-tL}$  generated by $L$,  satisfies
 the generalized Gaussian  $(p_0, p'_0)$-estimate   of order $m$,
if there exist constants $C, c>0$ such that
\begin{equation*}
 \label{GGE}
\tag{${\rm GGE_{p_0,p'_0, m} }$}
\big\|P_{B(x, t^{1/m})} e^{-tL} P_{B(y, t^{1/m})}\big\|_{p_0\to {p'_0}}\leq
C V(x,t^{1/m})^{-({\frac{1}{ p_0}}-{1\over p'_0})} \exp\left(-c\left({d(x,y)^m \over    t }\right)^{1\over m-1}\right)
\end{equation*}
for  every $t>0$ and $x, y\in X$.

Note that condition \eqref{GGE} for the special case $p_0=1$ is equivalent to
$m$-th order   Gaussian estimates (see   for example, \cite{BK2}). This means that the
semigroup $e^{-tL}$ has integral kernels $p_t(x,y)$ satisfying the following Gaussian upper estimate:
\begin{equation*}
 \label{GE}
 \tag{${\rm GE}_m$}
|p_t(x,y)| \leq {C\over V(x,t^{1/m})} \exp\left(-c \, {  \left({d(x,y)^{m}\over    t}\right)^{1\over m-1}}\right)
\end{equation*}
for every $t>0, x, y\in X$, where $c, C$ are two positive constants and $m\geq 2.$
Such estimate  \eqref{GE} is typical for elliptic or sub-elliptic differential operators of order $m$
(see for example, \cite{A, ACDH, CCO,   D, DM, DOS, Gi, JN, JN2, O,  Si, Sj,  TSC}
and the references therein).
However, there are numbers of operators which satisfy generalized Gaussian estimates and, among them,
 there exist many for which classical Gaussian estimates \eqref{GE}  fail.
  This happens, e.g., for Schr\"odinger operators with rough
 potentials \cite{ScV}, second order elliptic operators with rough  lower order terms \cite{LSV}, or
 higher order elliptic operators with bounded measurable coefficients
 \cite{D2}. See also \cite{Bl, Blu, BK2, COSY, KU, SYY}.

Our main result is that under the generalized Gaussian   estimate  \eqref{GGE} for some
  $1\leq p_0< 2$, it is sufficient to ensure that
such estimate \eqref{e1.1} holds for the operator  $\big(e^{itL}\big)_{t\in {\mathbb R}}$
for $p\in (p_0, p'_0).$ Our result can be stated as follows.

\begin{thm}\label{th1.1}
Suppose  that $(X, d, \mu)$ is  a  space of homogeneous type  with a dimension $n$.  Suppose that $L$
satisfies the property \eqref{GGE}  for some
  $1\leq p_0< 2$.
Then for every $p\in (p_0, p'_0)$, there exists a  constant $C=C(n,p)>0$ independent of $t$ such that
\begin{eqnarray} \label{e1.5}
 \left\| (I+L)^{-s }e^{itL} f\right\|_{p} \leq C (1+|t|)^{s} \|f\|_{p}, \ \ \ t\in{\mathbb R}, \ \
  \ s\geq n\big|{1\over  2}-{1\over  p}\big|.
\end{eqnarray}

As a consequence,  this  estimate \eqref{e1.5} holds for all $1<p<\infty$ when the heat kernel of $L$ satisfies a  Gaussian upper bound
\eqref{GE}.
\end{thm}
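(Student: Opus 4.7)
The plan is a complex interpolation argument based on the analytic family
\[
T_z := e^{z^2}(I+L)^{-z}e^{itL}, \qquad 0 \le \Re z \le s_0 := n\bigl(\tfrac{1}{p_0}-\tfrac{1}{2}\bigr),
\]
between the trivial $L^2$ bound on $\Re z = 0$ and an endpoint $L^{p_0}$ bound on $\Re z = s_0$. Self-adjointness of $L$ gives $(e^{itL})^{*}=e^{-itL}$, so by duality we may assume $p \in (p_0, 2]$ with $s = n(1/p - 1/2)$; the stronger claim for $s > n|1/2-1/p|$ then follows by factoring off the $L^p$-bounded operator $(I+L)^{-(s-n|1/2-1/p|)}$. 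The case $|t|\le 1$ is routine: $(1+\lambda)^{-s}e^{it\lambda}$ has uniformly controlled Hörmander-type norm, so a standard spectral multiplier theorem under \eqref{GGE} gives the bound. Thus we may assume $|t|\ge 1$.

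On $\Re z = 0$ the spectral theorem yields $\|T_z\|_{2\to 2}\le e^{-(\Im z)^2}$. The heart of the matter is the endpoint inequality on $\Re z = s_0$,
\[
\|T_z\|_{p_0 \to p_0} \le C\bigl(1 + |\Im z|\bigr)^{N}\bigl(1 + |t|\bigr)^{s_0},
\]
with polynomial control in $|\Im z|$, which is absorbed by $e^{z^2}$ to meet Stein's admissibility. Stein interpolation at $z = s \in (0, s_0)$ then produces $\|(I+L)^{-s}e^{itL}\|_{p\to p} \le C(1+|t|)^s$, which is \eqref{e1.5}.

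To establish the $L^{p_0}$ bound I would decompose $F_z(\lambda) := (1+\lambda)^{-z}e^{it\lambda}$ dyadically, writing $F_z = \eta(\lambda)F_z + \sum_{k \ge 1}\varphi(2^{-k}\lambda)F_z$ with $\varphi\in C_c^\infty([1/2,2])$. Each high-frequency piece factors as $G_k(2^{-k}L)$ with
\[
G_k(\mu) := \varphi(\mu)(1+2^k\mu)^{-z}e^{it 2^k \mu}, \qquad \supp G_k \subset [1/2,2].
\]
The scale-invariant multiplier theorem for operators satisfying \eqref{GGE} (cf.~\cite{COSY}) controls $\|G_k(2^{-k}L)\|_{p_0\to p_0}$ by a Sobolev norm of $G_k$, and direct differentiation gives $\|G_k\|_{W^{\alpha,\infty}} \lesssim (1+|\Im z|)^\alpha 2^{-k s_0}(1+|t|2^k)^\alpha$.

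The main obstacle is summing these dyadic estimates without losing the sharp exponent: on the high-frequency regime $|t|2^k \gtrsim 1$ a direct triangle-inequality summation with any $\alpha > s_0$ diverges like $|t|^{\alpha}\sum_k 2^{k(\alpha-s_0)}$. To reach the sharp value $s_0$ one must exploit cancellation across scales, either by replacing the $W^{\alpha,\infty}$ norm with an $L^2$-based Sobolev norm at the critical $\alpha = s_0$ and invoking Cotlar--Stein almost-orthogonality between the pieces $G_k(2^{-k}L)$ (whose principal oscillation frequencies $t\cdot 2^k$ are well separated in $k$), or by a direct kernel analysis via the wave-propagator representation $e^{itL} = \int \cos(\tau\sqrt{L})\,\widehat{g_t}(\tau)\wrt\tau$ combined with the finite propagation speed of $\cos(\tau\sqrt{L})$ delivered by \eqref{GGE}---mirroring Miyachi's stationary-phase computation in the Euclidean case. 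This endpoint multiplier step, rather than the interpolation framework, is where the real work lies.
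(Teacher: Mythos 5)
The interpolation scaffold reduces the theorem to the endpoint estimate $\|T_z\|_{p_0\to p_0}\le C(1+|\Im z|)^N(1+|t|)^{s_0}$ on the line $\Re z=s_0=n(1/p_0-1/2)$, and you explicitly acknowledge that this endpoint multiplier step ``is where the real work lies.'' That step is not merely the hard part --- it is, I believe, false as stated, and it is precisely the boundary case that the theorem excludes. The conclusion of the theorem holds only for $p$ in the \emph{open} interval $(p_0,p_0')$, so your family of bounds must degenerate as $\Re z\to s_0$; an $L^{p_0}\to L^{p_0}$ bound with polynomial loss in $|\Im z|$ is strictly stronger than anything the theorem claims. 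In the model case $L=\Delta$, $p_0=1$, $s_0=n/2$, the operator $e^{it\Delta}$ is unbounded on $L^1(\RN)$, and the classical sharp endpoint of Miyachi and Fefferman--Stein is an $H^1\to L^1$ estimate, not $L^1\to L^1$. To salvage the interpolation you would have to replace the $L^{p_0}$ endpoint by a Hardy-space estimate $H^{p_0}_L\to L^{p_0}$ adapted to $L$ and then interpolate the scale $(H^{p_0}_L,L^2)$; that is a viable program but a very different one, and you have not set it up. Moreover, the two mechanisms you float for proving the endpoint do not transfer to the abstract setting: Cotlar--Stein is an $L^2$ tool and gives no leverage on $L^{p_0}$; the wave-propagator representation via $\cos(\tau\sqrt{L})$ and finite propagation speed requires $m=2$, while the paper allows arbitrary $m\ge 2$; and a Miyachi-style stationary-phase kernel analysis needs an explicit Fourier-integral representation of $K_{e^{itL}\varphi_k(L)}$, which is exactly what is unavailable here (the paper stresses in Section 1.3 that neither the Fourier transform nor kernel regularity is at hand).

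For contrast, the paper avoids any endpoint and works entirely inside the open range: by duality it reduces to $2<p<p_0'$, then dominates $\|e^{itL}f\|_p$ by the Littlewood--Paley square function $T_\varphi f=\big(\sum_k|\varphi_k(L)e^{itL}f|^2\big)^{1/2}$, and controls $\|T_\varphi f\|_p$ through the good-$\lambda$/sharp-function estimate $\|\mathfrak{M}_2(T_\varphi f)\|_p\lesssim \|\mathfrak{M}^\#_{T_\varphi,L,K}f\|_p+\|f\|_p$, where $\mathfrak{M}^\#_{T_\varphi,L,K}$ is built from the approximation to the identity $(I-e^{-r_B^mL})^K$. The sharp function is then estimated by splitting the sum over $k$ according to the relative sizes of $r_B$, $2^k$ and $1+|t|$ (the terms $I$, $II$, $III$), and the decisive new ingredient is the $L^2$ off-diagonal estimate of Proposition~\ref{prop3.3} for $(I-e^{-sL})^Ke^{itL}\phi_k(L)$, with decay at the scale $2^{(m-1)k/m}(1+|t|)$. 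This replaces the kernel/stationary-phase estimate \eqref{e1.9} used in the Euclidean case and is what allows the sharp exponent $s=n|1/2-1/p|$ to survive the summation over scales without loss, where a triangle-inequality sum of dyadic multiplier bounds --- as you observe --- would diverge. So the overall architectures are genuinely different, and the step your proposal leaves open is the one the paper's method is designed to circumvent.
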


 As a consequence of Theorem~\ref{th1.1}, we have the following result.

 \begin{cor}\label{cor3.3}
Suppose  that $(X, d, \mu)$ is  a  homogeneous space  with a dimension $n$.  Suppose that $L$
satisfies  the property  \eqref{GGE}  for some
  $1\leq p_0< 2$.
Then for every $p\in (p_0, p'_0)$ and $s\geq  n|{1/ 2}-{1/ p}|$, the mapping $t\to (I+L)^{-s} e^{itL}$ is strongly continuous on $L^p(X).$
\end{cor}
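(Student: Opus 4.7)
The plan is to combine the uniform $L^p$-bound supplied by Theorem~\ref{th1.1} with a standard density reduction. Since Theorem~\ref{th1.1} yields
\[
\|(I+L)^{-s}e^{itL}\|_{L^p\to L^p}\leq C(1+|t|)^{s},
\]
the family $\{(I+L)^{-s}e^{itL}:t\in\mathbb{R}\}$ is uniformly bounded on any compact interval of $t$. A standard $\varepsilon/3$ argument therefore reduces the strong continuity of $t\mapsto(I+L)^{-s}e^{itL}$ on $L^p(X)$ to the verification of continuity on a norm-dense subspace.

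For the dense subspace I would take $\mathcal{E}=\{e^{-\tau L}g:\tau>0,\ g\in L^p(X)\cap L^2(X)\}$. Density in $L^p$ follows from two facts: $L^p(X)\cap L^2(X)$ is norm-dense in $L^p(X)$ for $1<p<\infty$, and $e^{-\tau L}h\to h$ in $L^p$ as $\tau\to 0^{+}$, which is a standard consequence of \eqref{GGE} (the heat semigroup is strongly continuous on $L^p$ for $p\in(p_0,p'_0)$ once the corresponding $L^p$-boundedness has been obtained by interpolating the off-diagonal $L^{p_0}\to L^{p'_0}$ bounds). Every $f=e^{-\tau L}g\in\mathcal{E}$ lies in $\operatorname{Dom}(L)$ with $Lf=Le^{-\tau L}g\in L^p(X)\cap L^2(X)$, because the analyticity of the heat semigroup provides $\|Le^{-\tau L}g\|_{p}\leq C\tau^{-1}\|g\|_{p}$ (and similarly in $L^2$).

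For such $f\in\mathcal{E}$ and fixed $t_0\in\mathbb{R}$, Stone's theorem (applied in $L^2$) gives the identity
\[
(I+L)^{-s}\bigl(e^{itL}-e^{it_0 L}\bigr)f=i\int_{t_0}^{t}(I+L)^{-s}e^{isL}(Lf)\,ds.
\]
To convert this $L^2$-identity into an $L^p$-estimate I would pair both sides against an arbitrary $g\in L^{p'}(X)\cap L^2(X)$ with $\|g\|_{p'}\leq 1$; Fubini and Theorem~\ref{th1.1} applied to the integrand then yield
\[
\bigl|\langle(I+L)^{-s}(e^{itL}-e^{it_0 L})f,\,g\rangle\bigr|\leq\int_{t_0}^{t}\|(I+L)^{-s}e^{isL}(Lf)\|_{p}\,\|g\|_{p'}\,ds\leq C|t-t_0|(1+\max(|t|,|t_0|))^{s}\|Lf\|_{p}.
\]
Taking the supremum over such $g$ (which is norm-dense in the unit ball of $L^{p'}$) gives
\[
\|(I+L)^{-s}(e^{itL}-e^{it_0 L})f\|_{p}\leq C|t-t_0|\bigl(1+\max(|t|,|t_0|)\bigr)^{s}\|Lf\|_{p},
\]
which tends to zero as $t\to t_0$.

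The one delicate point is bridging the $L^2$-valid Stone identity to an $L^p$-bound, since we cannot assume a priori that $s\mapsto(I+L)^{-s}e^{isL}(Lf)$ is $L^p$-continuous (that is essentially what we are trying to prove). The duality pairing with $L^{p'}\cap L^2$ circumvents this circularity cleanly, invoking no machinery beyond Theorem~\ref{th1.1} and the density of $L^{p'}\cap L^2$ in $L^{p'}$.
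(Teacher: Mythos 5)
Your argument is correct, and it takes a genuinely different route from what the paper has in mind. The paper simply says that Corollary~\ref{cor3.3} follows by ``minor modifications'' of \cite[Theorem~7.12]{O}, i.e.\ by the Laplace-transform representation $(I+L)^{-s}e^{itL}=\frac{1}{\Gamma(s)}\int_0^\infty u^{s-1}e^{-u}e^{-(u-it)L}\,du$ together with the strong continuity of $z\mapsto e^{-zL}$ on $L^p$ in the right half-plane; the ``modification'' is needed because at the endpoint exponent $s=n|1/2-1/p|$ that integral is no longer absolutely convergent, so one must first pass to a dense subspace of smoother data (e.g.\ $f=(I+L)^{-\varepsilon}h$) and use the uniform bound from Theorem~\ref{th1.1} and an $\varepsilon/3$ argument. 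Your proof replaces the Laplace representation with Stone's theorem (the fundamental theorem of calculus for the unitary group on $L^2$), applied on the dense subspace $\mathcal{E}=\{e^{-\tau L}g : \tau>0,\ g\in L^p\cap L^2\}$, and then transfers the resulting $L^2$-valued identity to an $L^p$-estimate by pairing against $L^{p'}\cap L^2$ and invoking Theorem~\ref{th1.1} on the integrand. Both approaches rest on the same two pillars (density and the uniform $L^p$ bounds of Theorem~\ref{th1.1}); yours avoids any appeal to complex-time semigroup bounds and is arguably more elementary and self-contained, while Ouhabaz's route more directly exploits the analyticity already used in the proof of Theorem~\ref{th1.1}.

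Two small points worth tightening. First, the dummy variable in the Stone identity collides with the exponent $s$; use a different letter. Second, the parenthetical justification that $e^{-\tau L}h\to h$ in $L^p$ ``once $L^p$-boundedness has been obtained by interpolation'' is a little too quick: uniform $L^p$-boundedness of $e^{-\tau L}$ plus $L^2$-strong continuity do not by themselves give $L^p$-strong continuity. The standard way to close this is either to note that $e^{-\tau L}$ is a bounded \emph{analytic} semigroup on $L^p$ whose generator is densely defined (so $C_0$-ness follows), or to verify $e^{-\tau L}h\to h$ in $L^p$ directly for $h$ in a dense class (say bounded with bounded support) using the off-diagonal bounds. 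This is indeed standard in the GGE setting, but it deserves a sentence rather than being attributed to interpolation alone.
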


We  now apply  the result of   Theorem~\ref{th1.1} to  study the property of  the solution to  the Schr\"odinger equation
 \begin{eqnarray}\label{e3.22}
\left\{
\begin{array}{ll}
  i{\partial_t u }  +L u=0,\\[5pt]
 u(\cdot, 0)=f.
\end{array}
\right.
\end{eqnarray}
Then we have
$$
u(t, x)=e^{itL}f(x).
$$
One can see that the operator $e^{itL}$ is  bounded on $L^p$ only for $p=2$. Following  Sj\"ostrand \cite{Sj}, we
define the Riesz means
\begin{eqnarray}\label{RieszMean}
I_s(t)(L):= st^{-s}\int_0^t(t-\lambda)^{s-1}e^{-i\lambda L}d\lambda
\end{eqnarray}
  for $t>0$, and $I_s(t)(L)={\overline I}_s(-t)(L)$  for $t<0$ (see also \cite{BE, H}),
 and ask the question: For what values of $s$ the operators $I_s(t)(L)$ are bounded on $L^p(X)$?

 Then we have  the following result.

 \begin{thm}\label{th1.2}
Suppose  that $(X, d, \mu)$ is  a  space of homogeneous type  with a dimension $n$. Suppose that $L$
satisfies the property  \eqref{GGE}  for some
  $1\leq p_0< 2$.
Then for every $p\in (p_0, p'_0)$, there exists a  constant $C=C(n,p)>0$ independent of $t$ such that
 \begin{eqnarray}\label{nnnw}
 \left\| I_{s}(t)(L) f\right\|_{p}   \leq    C \|f\|_{p}, \ \ \ \ t\in {\mathbb R}\backslash\{0\},  \ \
  \ s\geq n\big|{1\over  2}-{1\over  p}\big|.
\end{eqnarray}

As a consequence, this  estimate \eqref{nnnw}  holds for all $1<p<\infty$ when the heat kernel of $L$ satisfies a  Gaussian upper bound
\eqref{GE}.
\end{thm}

It is known  that   such estimate \eqref{nnnw} holds due to Sj\"ostrand   \cite{Sj}  for
  the Laplacian $-\Delta  $ on  ${\mathbb R}^n$(\cite{Sj});
see also   Thangavelu's work  \cite{Th}  for the  harmonic oscillator   $-\Delta +|x|^2$ on ${\mathbb R}^n$.

\medskip

The proof of Theorem~\ref{th1.1} and Corollary~\ref{cor3.3} will be given in Section 3.
The proof of Theorem~\ref{th1.2}  will be given in Section 4.

\medskip

\noindent
 {\bf 1.3. \, Comments on the results and  methods of the proof.}\ \
On Lie groups with polynomial growth and manifolds with non-negative Ricci curvature, similar results as in \eqref{e1.1}
for $s> n\left|{1/ 2}-{1/ p}\right|$ have been first announced by Lohou\'e in \cite{Lo}, then Alexopoulos   obtained them in \cite{A}. There,
the method is to replace Fourier analysis by the finite propagation speed of the associated wave equation \cite{Ta}.   In the abstract setting
of operators on metric measure spaces,   Carron, Coulhon and Ouhabaz \cite{CCO}  showed  $L^p$-boundedness of suitable
regularizations of the Schr\"odinger group $e^{itL}$ provided $L$ satisfies Gaussian estimate \eqref{GE}.
They proposed a different approach  to use   some techniques introduced by Davies \cite{D}:
the Gaussian semigroup estimates can be extended from real times $t>0$ to complex times $z\in {\mathbb C^+}=\{z\in {\mathbb C}:\,
{\rm Re\, z}>0 \} $ such that
\begin{eqnarray}\label{ek1}
\|e^{-zL}\|_{p\to p} \leq C \left({|z|\over {\rm Re}\, z }\right)^{n|{1\over 2} -{1\over p}|+\epsilon}, \ \ \ \ \forall z\in{\mathbb C^+}.
\end{eqnarray}
On the other hand, for every  $f\in L^2\cap L^p$ and $s\geq 0,$
\begin{eqnarray*}
(I+L)^{-s} e^{itL}f={1\over \Gamma(s)}\int_0^{\infty} e^{-u}u^{s-1} e^{-(u-it)L} f du,
\end{eqnarray*}
where $\Gamma$ is the Euler Gamma function.
From \eqref{ek1}, we see that for $s> n|{1/2} -{1/p}|$,
\begin{eqnarray} \label{ek}
\|(I+L)^{-s} e^{itL}\|_{p\to p}\leq C\int_0^{\infty} e^{-u} u^{s-1}
\left( \sqrt{u^2 +t^2\over u^2}\right)^{n|{1\over 2} -{1\over p}|+\epsilon}  du
\end{eqnarray}
and so \eqref{e1.5} holds for $s> n|{1/2} -{1/p}|$.
 The Gaussian bound  \eqref{GE} assumption  on $L$ was further weakened to  the
  generalized Gaussian   estimates  \eqref{GGE}   by Blunck \cite[Theorem 1.1]{Bl} where  the estimate  \eqref{ek1} was improved
  to get   $\epsilon=0$, i.e.
 \begin{eqnarray}\label{ekee1}
\|e^{-zL}\|_{p\to p} \leq C \left({|z|\over {\rm Re}\, z}\right)^{n|{1\over 2} -{1\over p}|}, \ \ \ \ \forall z\in{\mathbb C^+}
\end{eqnarray}
for all $p\in [p_0, p'_0]$ with $p\not=\infty,$
and so \eqref{e1.5} holds for $s> n|{1/2} -{1/p}|$. However, it is direct to see that  the integral in \eqref{ek}
is {\bf  $\infty$}   when $s=n|{1/2} -{1/p}|.$

 It was an open question whether  estimate \eqref{e1.5} holds with  $s =  n|{1/2}-{1/p}|$. Based on estimate \eqref{ekee1},
 it is straightforward to obtain sharp $L^p$ frequency truncated estimates for $e^{itL} $ that
 for every $p\in (p_0, p'_0)$ and $ k\in {\mathbb Z}^+$,
 \begin{eqnarray}\label{e11.3  in}
\|e^{itL} \phi(2^{-k}L)f\|_{p}\leq C (1+2^k |t|)^{s }\|f\|_{p}, \ \ t\in{\mathbb R},
 \ \ \   s= n\big|{1\over 2}-{1\over p}\big|
 \end{eqnarray}
uniformly for $\phi$ in   bounded subsets of $C_0^{\infty}(\mathbb R) $, by writing
$$
e^{itL} \phi(2^{-k}L)f = e^{-(2^{-k}-it)L} [\phi_e(2^{-k}L) ] (f)
$$
where $\phi_e(\lambda)=e^{\lambda} \phi(\lambda)$ and then applying \eqref{ekee1}
to $e^{-(2^{-k}-it)L}$ and \cite[Theorem 1.1]{Blu} to $\phi_e(2^{-k}L)$, respectively
(for more details, see Proposition~\ref{prop1.1} below).
As a consequence of \eqref{e11.3  in},  it follows  by a standard scaling argument (\cite[p. 193]{JN})  that
for every $p\in (p_0, p'_0)$ and for  every $\epsilon>0$,
 \begin{eqnarray}\label{e1.4}
\|(I+L)^{-s -\epsilon} e^{itL}  f\|_{p}\leq C (1+  |t| )^{s }\|f\|_{p},\ \ \ \
t\in {\mathbb R}, \ \ \ s= n\big|{1\over  2}-{1\over  p}\big|.
 \end{eqnarray}

We would like to  mention that in \cite{DN},
D'Ancona and Nicola  used   a commutator argument and
 a reduction to amalgam spaces and followed  the methods of Jensen-Nakamura \cite{JN, JN2} to obtain
 estimates \eqref{e11.3  in} and \eqref{e1.4} for the Schr\"odinger group $e^{itL}$   for $p\in [p_0, p'_0]$
 in the Euclidean spaces ${\mathbb R}^n$.
 However, as in \cite[p.1021]{DN},
the authors  remarked that  ``{\it Another interesting issue is the validity of \eqref{e1.4} with $\epsilon=0$. Indeed, for $L=-\Delta$ in ${\mathbb R}^n$ and
 $1<p<\infty$, the estimate \eqref{e1.4} was proved with $\epsilon=0$ (and $t=1$) in \cite{Mi}, but this sharp form seems out of reach
 in the present generality, even for fixed $t$"}.
 Under an additional condition which is the operator $e^{itL}$ being bounded in suitable modulation spaces (see \cite[Section 5]{DN}
  for the definition),  it was proved in  \cite{DN} that estimate    \eqref{e1.4} holds with $\epsilon=0$ in the setting of ${\mathbb R}^n.$
 See also previous related results \cite{BDN,   JN, JN2}.

 \medskip

Our main result, Theorem~\ref{th1.1},  gives the sharp endpoint  estimate  \eqref{e1.4}
for the  Schr\"odinger group $e^{itL}$
     with $\epsilon=0$, namely with the optimal number of derivatives and the optimal time growth for
	 the factor
$(1+|t|)^{s}$ in \eqref{e1.4}.
The proof of    Theorem~\ref{th1.1}
  is different from those of Fefferman and Stein \cite{FS} and Miyachi \cite{Mi1, Mi}
where the results rely  heavily on Fourier analysis. In our setting,   we do not have  Fourier transform  at our disposal.
We also do not assume that the heat kernel $p_t(x,y)$
	  satisfies the standard regularity condition, thus standard techniques of
Calder\'on--Zygmund theory (\cite{St2}) are not applicable.  The
lack  of smoothness of the kernel
will be overcome in Proposition~\ref{prop3.3} below by using
some off-diagonal estimates  on heat semigroup  of non-negative self-adjoint operators,
and some techniques in the theory of
 singular integrals with rough kernels, which
  lies beyond the scope of the standard Calder\'on-Zygmund
theory (see for example,
    \cite{ACDH,  Blu, BK2, COSY,  DM, DOS, DY, KU, O, SYY} and the references therein).
 More specifically, by duality   we are reduced to prove
the estimate for $2<p<p'_0$, which will follow  by  the Littlewood-Paley inequality and a variant of
the Fefferman-Stein sharp function (see \cite{ACDH, DY, FS, Ma, SYY}),
\begin{eqnarray}\label{e1.6}
\|e^{itL}f\|_p  \leq
C \| T_\varphi f \|_p
\leq  C\|  {\mathfrak  M}_2\big( | T_\varphi f| \big)  \|_p
\leq C_p \big(  \|{\mathfrak  M}^{\#}_{T_\varphi, L, K}f  \|_p +\|f\|_p\big),
\end{eqnarray}
where
\begin{eqnarray}\label{e1.7}
T_\varphi f(x)= \left(\sum_{k\geq 0 } |\varphi_k({L})e^{itL}f(x)|^2\right)^{1/2}
\end{eqnarray}
for some cut-off function $\varphi\in C_0^{\infty}([1/2, 2])$, where $\varphi_k(\lambda)=\varphi(2^{-k}\lambda), k\geq 1$
and $\varphi_0(\lambda) +\sum_{k\geq 1} \varphi_k(\lambda)\equiv 1$ for $\lambda>0$, and  for a large $K\in{\mathbb N},$
\begin{eqnarray}\label{e1.8}
{\mathfrak  M}^{\#}_{T_\varphi, L, K}f(x)&=& \sup_{B\ni x} \left( \fint_{B} \big|T(I-e^{-r_B^mL})^K f(y)\big|^2 d\mu(y)\right)^{1/2}.
\end{eqnarray}
We then  use a variant of an argument in  \cite{LRS, RS} to decompose the function ${\mathfrak  M}^{\#}_{T_\varphi, L, K}f$
into several components  so that we can employ the off-diagonal estimates \eqref{e1.10} below. Then we
show that  the function ${\mathfrak  M}^{\#}_{T_\varphi, L, K}f$ is  in $L^p$
by using   estimate \eqref{e11.3  in} for the Schr\"odinger
group $e^{itL}$.  We note that    in the case  that
  $L$ is the Laplace operator $\Delta$ on ${\mathbb R}^n$,    the  kernel estimate  relies heavily on Fourier analysis
 since the operator $ e^{it\Delta}{  \varphi }(2^{-k} {\Delta}) $
  has the convolution kernel
  $$
  K_{e^{it\Delta}{  \varphi }(2^{-k} {\Delta})  }(x)
  ={2^{kn/2}\over (2\pi)^n} \int_{\mathbb R^n} \varphi(|\xi|^2) e^{i (2^{k/ 2}\langle x,\,  \xi\rangle +2^{k}t\, |\xi|^2 )} d\xi,
  $$
 one then uses  integration by parts to obtain
  that    for every $M>0,$
\begin{eqnarray} \label{e1.9}
\left| K_{e^{it\Delta}{  \varphi }(2^{-k} {\Delta})  }(x)\right| \leq C  2^{kn/2}   (1+2^{k/2}|x|)^{-M}
\end{eqnarray}
whenever $ |x|\geq   2^{k/2+4}$ and $ t\in[0, 1]$  (see   for example, \cite[page 62]{RS}).
However, when $L$ is a  general non-negative self-adjoint operator acting on the space $L^2(X)$ satisfying \eqref{GGE} with $p_0\in[1,2)$,
such estimate
\eqref{e1.9}
may  or may not hold.
 In our setting,  we need the following off-diagonal estimate of the operator
  $ e^{itL}{\varphi} ( 2^{-k}{L})$ (see Proposition~\ref{prop3.3} below): For every $M>0$, there exists a positive constant $C=C(n,m, M)$ independent
  of $t$ such that
\begin{eqnarray}\label{e1.10}
 \big\|P_{B_1}   e^{itL} {  \varphi}( 2^{-k}{L})    P_{B_2} f\big\|_2
 \leq C   \left(1+ {d(B_1, B_2)\over  2^{(m-1)k/m}(1+|t|)}\right)^{-M} \|P_{B_2} f  \|_2, \ \ \  t\in {\mathbb R}
\end{eqnarray}
for all balls $  B_1, B_2\subset X$ with radius $r_{B_1}=r_{B_2}\geq c2^{(m-1)k/m}(1+|t|)$ for some $c\geq 1/4$,
and $d(B_1, B_2)\geq 6 r_{B_1}$.
This new estimate is   crucial for the proof of Theorem~\ref{th1.1}.

\smallskip

The paper is organized as follows.
In Section 2 we provide some preliminary results on off-diagonal estimates of the operator $e^{itL}{\varphi} ( 2^{-k}{L})$
and   spectral multipliers and Littlewood-Paley theory,  which we need later,
mainly to prove \eqref{e1.10} in Proposition~\ref{prop3.3}.
The  proof of Theorem~\ref{th1.1}  will be given in Section 3. In Section 4 we will apply Theorem~\ref{th1.1}
to obtain $L^p$-boundedness of the Riesz means of the solution to the Schr\"odinger equation.

 \medskip

\noindent
{\bf List of notations.}
\\
$ \bullet$ $(X,d,\mu) $ denotes  a metric measure space
 with a distance $d$ and a measure  $\mu$.
 \\
$ \bullet$ $L$
is a non-negative self-adjoint operator acting on the
space $L^2(X).$
\\
$ \bullet$ For $x\in X$ and $r>0$, $B(x,r)=\{y\in X: d(x,y)<r\}$  and {$V(x,r)= \mu\big( B(x,r)\big).$
\\
$\bullet$ For   $B=B(x_B, r_B)$,    $A(x_B, r_B, 0)=B$ and
 $
A(x_B, r_B, j)=B(x_B, (j+1)r_B)\backslash B(x_B, jr_B) $ for $ j=1, 2, \ldots.
 $\\
$ \bullet$  $\delta_RF$ is defined by
$\delta_RF(x)=F(Rx)$ for $R>0$ and Borel function $F$ supported  on $ [-R, R].$
 \\
 $\bullet$
 $\lfloor t\rfloor$ denotes the integer part of $t$
for any positive real number $t$.
\\
 $\bullet$ $\NN$ is the set of positive integers.
\\
 $\bullet$ For $p\in [1,\infty]$, $p'={p}/{(p-1)}$.
 \\
 $\bullet$  For $1\le p\le\infty$ and  $f\in L^p(X,{\rm d}\mu)$,  $\|f\|_p=\|f\|_{L^p(X,{\rm d}\mu)}.$
 \\
 $\bullet$
  $\langle \cdot,\cdot \rangle$ denotes
the scalar product of $L^2(X, {\rm d}\mu)$.
\\
 $\bullet$ For $1\le p, \, q\le+\infty$,  $\|T\|_{p\to q} $ denotes the  operator norm of $T$
 from $ L^p(X, {\rm d}\mu)$ to $L^q(X, {\rm d}\mu)$.
 \\
 $\bullet$ If $T$ is  given by $Tf(x)=\int K(x,y) f(y) d\mu(y)$,  we denote by  $K_T$ the kernel of $T$.
 \\
 $\bullet$
 Given a  subset $E\subseteq X$,    $\bchi_E$  denotes  the characteristic
function of   $E$  and
 $
P_Ef(x)=\chi_E(x) f(x).
 $
\\
 $\bullet$ For every $B\subset X$,  we write $\fint_B f d\mu(y)=\mu(B)^{-1}\int_B f(y)d\mu(y)$.
\\
 $\bullet$
 For  $1\leq r <\infty$, $\mathfrak M_r$ denotes the uncentered  $r$-th maximal operator over balls in $X$, that is
 \begin{equation*}
\mathfrak  M_rf(x)=\sup_{B\ni x} \left( \fint_{B}
|f(y)|^rd\mu(y)\right)^{1/r}.
\end{equation*}
 For simplicity we denote by $\mathfrak M$ the Hardy-Littlewood maximal function $\mathfrak M_1$.

\bigskip

\section{Off-diagonal estimates and spectral multipliers}
\setcounter{equation}{0}

In this section we assume  that $(X, d, \mu)$ is a space of homogeneous type with a dimension $n$ in \eqref{e2.2} and that $L$
  is a self-adjoint non-negative operator in $L^2(X)$   satisfying the generalized Gaussian estimate
   \eqref{GGE}  for some
  $1\leq p_0< 2$.

\medskip

\noindent
{\bf 2.1. Off-diagonal estimates}. \ We start by  collecting
 some properties of the generalized Gaussian estimates obtained by Blunck and Kunstmann,
see for example, \cite{Bl, Blu,  BK2, KU} and the references therein.
For every $j\geq 1,$
we recall that  $ A(x_B, r_B, j)=B(x_B, (j+1)r_B)\backslash B(x_B, jr_B)$.
The following result originally  stated  in  \cite[Lemma 2.5]{KU} (see also \cite[Theorem 2.1]{Bl})
shows that generalized Gaussian estimates can be extended from
real times $t>0$ to complex times   $z\in {\mathbb C}$ with ${\rm Re} z>0$.
Recall that 
  $\bchi_E$  denotes the characteristic function of $E\subseteq X$ and  set $P_Ef(x)=\chi_E(x) f(x)$.

\begin{lemma}\label{le2.1}
  Let $m\geq 2$ and $ 1\leq p\leq 2\leq q\leq \infty$, and $L$ be a
 non-negative self-adjoint operator   on $L^2(X)$. Assume that  there exist constants $C, c>0$ such that
for all $t>0$,  and all $x,y\in X,$
\begin{eqnarray*}
\big\|P_{B(x, t^{1/m})} e^{-tL} P_{B(y, t^{1/m})}\big\|_{p\to {q}}\leq
 C V(x,t^{1/m})^{-({1\over p}-{1\over q})} \exp\Big(-c\Big({d(x,y) \over    t^{1/m}}\Big)^{m\over m-1}\Big).
\end{eqnarray*}
Let $r_z=({\rm Re}\, z)^{{1\over m}-1 } |z|$
for each  $z\in{\mathbb C}$ with ${\rm Re}\, z >0$.
\begin{itemize}

\item[(i)]  There exist two positive constants $C' $ and $ c'$ such that for all $r>0, x\in X, $
 and $z\in{\mathbb C}$ with ${\rm Re}\, z >0,$
\begin{eqnarray*}
 &&\hspace{-1.5cm}\big\|P_{B(x, r)}    e^{-zL} P_{B(y, r)}\big\|_{p \to q}\\
 &\leq &C' V(x,r)^{-({1\over p }-{1\over q})} \Big(1+{r\over r_z}\Big)^{n({1\over p }-{1\over q})}
  \Big({|z|\over {\rm Re}\,  z}\Big)^{n({1\over p }-{1\over q})}
  \exp\Big(-c' \Big({ d(x,y)   \over   r_z}  \Big)^{m\over m-1} \Big).
\end{eqnarray*}

\item[(ii)]  There exist two positive constants $C''$ and $ c''$ such that for all $r>0, x\in X, k\in{\mathbb N}$
 and $z\in{\mathbb C}$ with ${\rm Re}\, z >0,$
\begin{eqnarray*}
 &&\hspace{-1.5cm}\big\|P_{B(x, r)}    e^{-zL} P_{A(x, r, k)}\big\|_{p \to q}\\
 &\leq &C'' V(x,r)^{-({1\over p }-{1\over q})} \Big(1+{r\over r_z}\Big)^{n({1\over p}-{1\over q})}
  \Big({|z|\over {\rm Re} \, z}\Big)^{n({1\over p }-{1\over q})} k^n
  \exp\Big(-c'' \Big({ r   \over    r_z} k\Big)^{m\over m-1} \Big).
\end{eqnarray*}
\end{itemize}
 \end{lemma}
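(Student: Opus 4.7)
My plan is to prove part (i) by combining the hypothesized real-time $(p,q)$ estimate with a complex-time analytic continuation argument in the spirit of Davies' perturbation method together with the Phragm\'en--Lindel\"of principle, and then to derive part (ii) from part (i) by covering the annulus $A(x_B,r,k)$ with roughly $k^n$ balls of radius $r$ via doubling.

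For part (i), first I would reduce the problem to the natural scale $r = r_z$. If $r \leq r_z$, then inclusion of balls reduces to the case $r = r_z$; if $r \geq r_z$, I would cover $B(x, r)$ and $B(y, r)$ by of order $(r/r_z)^n$ balls of radius $r_z$, using \eqref{e2.2}, and sum. The factor $(1 + r/r_z)^{n(1/p - 1/q)}$ in the statement is precisely what this covering argument yields, once combined with the appropriate triangle-inequality bookkeeping in the exponential factor.

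The central step is to extend the estimate at the scale $r = r_z$ from real $t > 0$ to $z \in \CC$ with $\mathrm{Re}\, z > 0$. I would introduce the twisted semigroup $M_\rho(z) := e^{\rho \phi} e^{-zL} e^{-\rho \phi}$, where $\phi \colon X \to \RR$ is a bounded Lipschitz function with $\|\phi\|_{\mathrm{Lip}} \leq 1$. On the real axis, the hypothesized bound absorbs the Gaussian $\exp(-c(d(x,y)/t^{1/m})^{m/(m-1)})$ against the weight $e^{\rho(\phi(x) - \phi(y))}$ after a Young-type completion, yielding a bound of the form $V(x,t^{1/m})^{-(1/p-1/q)} \exp(C \rho^{m/(m-1)} t^{1/(m-1)})$. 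On the imaginary axis, the $L^2$-isometry $\|e^{-i\tau L}\|_{2\to 2} = 1$, combined with a single application of the real-time hypothesis at an arbitrarily small positive time, controls $\|M_\rho(i\tau)\|_{p\to q}$. A Phragm\'en--Lindel\"of / three-lines argument on the right half-plane applied to the analytic family $z\mapsto \langle M_\rho(z) f, g\rangle$ then interpolates these two boundary bounds and produces the prefactor $(|z|/\mathrm{Re}\, z)^{n(1/p - 1/q)}$. Finally, optimizing over $\rho$ and choosing $\phi(y) \asymp d(x_0, y) / r_z$ for a suitable base point $x_0$ converts the exponential weight back into the Gaussian decay $\exp(-c'(d(x,y)/r_z)^{m/(m-1)})$.

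Part (ii) will follow from part (i). Since $A(x_B, r, k) \subset B(x_B, (k+1)r)$, the doubling property \eqref{e2.2} allows a covering by at most $C k^n$ balls $B(x_j, r)$ with $d(x_B, x_j) \geq k r$. Applying (i) to each pair $(B(x_B, r), B(x_j, r))$ and summing over $j$ produces the polynomial factor $k^n$ from the covering count and the Gaussian decay evaluated at distance of order $k r$, giving the factor $\exp(-c''((r/r_z) k)^{m/(m-1)})$. The main obstacle I expect is producing the non-standard sub-elliptic exponent $m/(m-1)$ in the Gaussian factor through the twisted-semigroup machinery: this demands a careful Young-type optimization of $\rho$ against the real-axis bound for $M_\rho$, and verifying that the analytic interpolation on the half-plane preserves simultaneously the algebraic prefactor $(|z|/\mathrm{Re}\, z)^{n(1/p - 1/q)}$ and the Lipschitz-to-$m/(m-1)$ conversion of the weight.
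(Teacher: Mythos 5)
The paper does not give a self-contained proof of this lemma — it cites Kunstmann–Uhl \cite[Lemma 2.5]{KU} and Blunck \cite[Theorem 2.1]{Bl} and remarks only that the argument "relies on the Phragm\'en–Lindel\"of theorem." Your proposal is therefore assessed on its own terms, and it is in the right spirit: Davies twisted semigroups and a Phragm\'en–Lindel\"of step are the correct tools, and the derivation of (ii) from (i) by covering the annulus with $O(k^n)$ balls of radius $r$ is a sound and standard step (modulo the routine fact that summing $L^p\to L^q$ off-diagonal blocks over a partition of the annulus actually contributes $k^{n/p'}$, which is absorbed by $k^n$ and in any case by the Gaussian factor).

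The genuine gap is in your treatment of the boundary for part~(i). You propose to run Phragm\'en–Lindel\"of on the full open right half-plane, with the imaginary axis as the boundary, and to bound $\|M_\rho(i\tau)\|_{p\to q}$ there using the $L^2$-isometry of $e^{-i\tau L}$ plus "a single application of the real-time hypothesis at an arbitrarily small positive time." This does not work, for two independent reasons. First, the twisted operator $M_\rho(i\tau)=e^{\rho\phi}e^{-i\tau L}e^{-\rho\phi}$ is not bounded from $L^p$ to $L^q$ for $p<2<q$: $e^{-i\tau L}$ is only an $L^2$ isometry, and the conjugation by $e^{\pm\rho\phi}$ is unbounded on $L^2$ when $\phi$ is an unbounded Lipschitz function (and if you truncate $\phi$ to make it bounded, you pick up a factor $e^{2\rho\|\phi\|_\infty}$ that destroys the later optimization in $\rho$). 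Appealing to the heat kernel hypothesis at time $\varepsilon>0$ cannot repair this: the hypothesis gives $V(x,\varepsilon^{1/m})^{-(1/p-1/q)}$, which blows up as $\varepsilon\to0$ whenever $p<q$. Second — and more structurally — the target estimate contains the factor $\big(|z|/\mathrm{Re}\,z\big)^{n(1/p-1/q)}$, which is $+\infty$ on the imaginary axis; so any Phragm\'en–Lindel\"of argument that uses the imaginary axis as a boundary would need a boundary bound that is already infinite there, and hence cannot propagate a finite bound into the interior. The actual argument must apply Phragm\'en–Lindel\"of on a sector or strip that lies entirely inside $\{\mathrm{Re}\,z>0\}$, with both boundary rays at angles strictly less than $\pi/2$ where the real-time estimates (after a composition $e^{-zL}=e^{-\frac{1}{2}\mathrm{Re}\,z\,L}\,e^{-(z-\frac{1}{2}\mathrm{Re}\,z)L}$ or a Davies twist at positive real part) give finite bounds, and the blow-up factor $\big(|z|/\mathrm{Re}\,z\big)^{n(1/p-1/q)}$ must emerge from tracking the dependence on the aperture as it approaches $\pi/2$. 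Without this change, the interpolation step in your proof does not close.

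Two smaller remarks. In your reduction to the scale $r=r_z$ you attribute the factor $\big(1+r/r_z\big)^{n(1/p-1/q)}$ to the covering argument, but it really arises from converting $V(x,r_z)^{-(1/p-1/q)}$ to $V(x,r)^{-(1/p-1/q)}$ via \eqref{e2.2}; the covering of $B(x,r)$ by $\sim(r/r_z)^n$ smaller balls contributes an additional count factor that must be handled with care in the $L^p\to L^q$ operator norm (it is again absorbed by the Gaussian, but should be stated explicitly). These are fixable bookkeeping points, not structural gaps, unlike the imaginary-axis issue above.
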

 \begin{proof}
 For the
 detailed proof
 we refer  readers to \cite{KU}.
  Here we only    mention that the proof of Lemma \ref{le2.1} relies on the Phragm\'en-Lindel\"of theorem.
 \end{proof}

 \medskip
  Next suppose that $m\geq 2$.
  We say that the semigroup $e^{-tL}$ generated by non-negative self-adjoint operator $L$ satisfies
 {\it $m$-th order Davies-Gaffney  estimates},
if there exist constants $C, c>0$ such that
for all $t>0$,  and all $x,y\in X,$
 \begin{equation*}
 \label{DG}
 \tag{${\rm DG}_m$}
\big\|P_{B(x, t^{1/m})} e^{-tL} P_{B(y, t^{1/m})}\big\|_{2\to {2}}\leq
C   \exp\left(-c\left({d(x,y) \over    t^{1/m}}\right)^{m\over m-1}\right).
\end{equation*}
Note that if  condition \eqref{GGE} holds for
 some $1\leq p_0\leq 2 $ with $p_0<2$,
then  the semigroup
$e^{-tL}$  satisfies estimate \eqref{DG}.

The following Lemma  describes  a useful consequence of $m$-order
Davies-Gaffney estimates (see \cite[Lemma 2.2]{SYY}).

\begin{lemma}\label{le2.2}
Let $m\geq 2$ and  $L$ satisfies the   Davies-Gaffney  estimates \eqref{DG}.
Then for every  $M>0$, there exists a constant $C=C(M)$ such that
for every $j=2,3,\ldots$
\begin{eqnarray}\label{e2.3} \hspace{1cm}
 \big\|P_{B}    F({L}) P_{A(x_B, r_B, j)}\big\|_{2\to 2}\leq
 C j^{-M}  \big(\sqrt[m]{R} r_B)^{-(M+n)}   \|\delta_R F\|_{W^{M+n+1}_2}
\end{eqnarray}
for all  balls $B\subseteq  X$, and all Borel functions $F$  such that supp $F\subseteq [-R, R]$.
 \end{lemma}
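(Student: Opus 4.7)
The plan is to combine a Fourier-analytic representation of $F$ with the complex-time off-diagonal estimates of Lemma~\ref{le2.1}, using the Sobolev regularity of $\delta_R F$ to absorb the oscillatory modes. Set $G := \delta_R F$ (supported in $[-1,1]$) and introduce the auxiliary function $H(\lambda) := e^\lambda G(\lambda)$, which is still supported in $[-1,1]$ and satisfies $\|H\|_{W^{M+n+1}_2} \lesssim \|G\|_{W^{M+n+1}_2}$, since multiplication by the smooth non-vanishing factor $e^\lambda$ on a compact interval is bounded on $W^s_2$. Writing $F(L)=G(L/R)=e^{-L/R}H(L/R)$ and applying Fourier inversion to $H$ gives
\[
F(L) \;=\; \frac{1}{\sqrt{2\pi}} \int_{\mathbb R} \hat H(\eta)\, e^{-(1-i\eta)L/R}\, d\eta.
\]
The factor $e^{-L/R}$ is the key trick: it converts the unitary group $e^{i\eta L/R}$, for which we have no off-diagonal bounds, into a complex-time contraction amenable to Lemma~\ref{le2.1}.

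Setting $\tilde r := R^{1/m} r_B$ and $z := (1-i\eta)/R$, one computes $\mathrm{Re}\,z=1/R$, $|z|=\sqrt{1+\eta^2}/R$, and hence $r_z=R^{-1/m}\sqrt{1+\eta^2}$. For $p=q=2$ the volume factor, the $(1+r/r_z)^{n(1/p-1/q)}$ factor and the $(|z|/\mathrm{Re}\,z)^{n(1/p-1/q)}$ factor in Lemma~\ref{le2.1}(ii) all reduce to $1$, leaving
\[
\|P_B\, e^{-zL}\, P_{A(x_B, r_B, j)}\|_{2\to 2} \;\leq\; C\, j^n\, \exp\!\left(-c\, \bigl(j\tilde r / \sqrt{1+\eta^2}\bigr)^{m/(m-1)}\right).
\]
Inserting this into the representation above and splitting the $\eta$-domain at $|\eta|\sim j\tilde r$, I estimate each piece by Cauchy--Schwarz against the weight $(1+\eta^2)^{M+n+1}$. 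In the high-frequency regime $|\eta|\geq j\tilde r$, I discard the exponential and use the tail $\int_{|\eta|\geq j\tilde r}(1+\eta^2)^{-(M+n+1)}d\eta\lesssim (j\tilde r)^{-2(M+n)-1}$, producing a factor $(j\tilde r)^{-(M+n+1/2)}$. In the low-frequency regime $|\eta|<j\tilde r$, I use the super-polynomial bound $\exp(-cx^{m/(m-1)})\leq C_A x^{-A}$ with $A=M+n+1/2$ and bound $\int|\hat H(\eta)|(1+\eta^2)^{A/2}d\eta\lesssim \|H\|_{W^{M+n+1}_2}$ by Cauchy--Schwarz (the auxiliary integral converges since $A<2(M+n+1)-1/2$). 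Combining, one obtains $\lesssim j^{-M-1/2}\tilde r^{-(M+n+1/2)}\|\delta_R F\|_{W^{M+n+1}_2}$, which implies \eqref{e2.3} whenever $j\tilde r\geq 1$.

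The main obstacle is the exponent bookkeeping: one must allocate exactly $M+n+1/2$ powers of $(1+\eta^2)$ to the polynomial-decay conversion of the exponential and route the remainder through Cauchy--Schwarz with the Sobolev weight, so that the final exponents match the sharp $M+n$ in $\tilde r$ and $M$ in $j$. The complementary regime $j\tilde r<1$ is handled separately: since $j\geq 2$ forces $\tilde r<1/2$ and therefore $j^M\tilde r^{M+n}=(j\tilde r)^M\tilde r^n<1$, the trivial spectral-theorem bound $\|F(L)\|_{2\to 2}\leq \|F\|_\infty\lesssim \|\delta_R F\|_{W^{M+n+1}_2}$ (via the 1D Sobolev embedding $W^{M+n+1}_2\hookrightarrow L^\infty$) already dominates the target $j^{-M}\tilde r^{-(M+n)}\|\delta_R F\|_{W^{M+n+1}_2}$.
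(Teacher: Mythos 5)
Your approach is essentially the paper's: insert the factor $e^{-L/R}$, write $F(L)=e^{-L/R}H(L/R)$ with $H=e^{\lambda}\delta_R F$, Fourier-invert $H$ into complex-time semigroups $e^{-(1-i\eta)L/R}$, apply Lemma~\ref{le2.1}(ii), and close by Cauchy--Schwarz against the Sobolev weight. But you introduce complications that the paper avoids and that create a flaw in the low-frequency piece.

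You set $A=M+n+1/2$ when converting the Gaussian tail to polynomial decay and then assert that $\int|\hat H(\eta)|(1+\eta^2)^{A/2}\,d\eta\lesssim\|H\|_{W^{M+n+1}_2}$ ``since $A<2(M+n+1)-1/2$''. That inequality is not the right condition. After Cauchy--Schwarz the auxiliary integral is $\int(1+\eta^2)^{A-(M+n+1)}\,d\eta$, which converges iff $A<M+n+1/2$; your choice $A=M+n+1/2$ sits exactly on the divergent endpoint, giving $\int(1+\eta^2)^{-1/2}\,d\eta=\infty$. Restricting to $|\eta|<j\tilde r$ makes the integral finite, but then you pick up an extra $\sqrt{\log(j\tilde r)}$ which your displayed estimate does not show; it happens to be absorbed by the $(j\tilde r)^{-1/2}$ you have in reserve, but the justification as written is wrong.

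The cleaner resolution is the paper's: use $A=M+n$ rather than $M+n+1/2$. Then $j^n\exp(-c(j\tilde r/\sqrt{1+\eta^2})^{m/(m-1)})\le C\,j^{-M}\tilde r^{-(M+n)}(1+\eta^2)^{(M+n)/2}$ holds for \emph{all} $\eta$ and all $j\tilde r>0$ (because $\sup_{x>0}x^{A}e^{-cx^{m/(m-1)}}<\infty$), and Cauchy--Schwarz leaves $\int(1+\eta^2)^{-1}\,d\eta<\infty$ with room to spare. This removes the high/low frequency split and the $j\tilde r<1$ case entirely: neither is needed, and neither appears in the paper's proof.
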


\begin{proof}  Let $G(\lambda)=(\delta_RF)({\lambda}) e^{\lambda}.$ In virtue of the Fourier inversion formula
$$
G(L/R)e^{-L/R}={1\over 2\pi} \int_{\mathbb R} e^{(i\tau-1)R^{-1}L} {\hat G}(\tau)d\tau
$$
so
$$
 \|P_{B}    F({L}) P_{A(x_B, r_B, j)}\|_{2\to 2} \leq
{1\over 2\pi} \int_{\R} |{\hat G}  (\tau)| \,   \|P_{B}   e^{(i\tau-1)R^{-1}L} P_{A(x_B, r_B, j)} \|_{2\to 2}
  d\tau.
$$
  By (ii) of Lemma~\ref{le2.1} (with $r_z =\sqrt{1+\tau^2}/\sqrt[m]{R}$),
 \begin{eqnarray*}
\|P_{B}   e^{(i\tau-1)R^{-1}L} P_{A(x_B, r_B, j)} \|_{2\to 2}&\leq&
Cj^n
  \exp\left(-c\left({ \sqrt[m]{R} j r_B \over    \sqrt{1+\tau^2}}\right)^{m\over m-1}   \right)\\
  &\leq& C_M  j^n   \left( {  \sqrt[m]{R} jr_B \over \sqrt{1+\tau^2}    } \right)^{-M-n}\\
  &\leq& C j^{-M} \big(1+\tau^2)^{ M+n \over 2}  \big(\sqrt[m]{R} r_B)^{-(M+n)}.
\end{eqnarray*}
Therefore (compare \cite[(4.4)]{DOS})
  \begin{eqnarray*}
   &&\hspace{-1cm}\|P_{B}    F({L}) P_{A(x_B, r_B, j)}\|_{2\to 2}\\
   &\leq&
 Cj^{-M}  \big(\sqrt[m]{R} r_B)^{-(M+n)}
   \int_{\R} |{\hat G}  (\tau)|  \big(1+\tau^2)^{ M+n \over 2}  d\tau\nonumber\\
   &\leq&
 C j^{-M}  \big(\sqrt[m]{R} r_B)^{-(M+n)}
   \left(\int_{\R} |{\hat G}  (\tau)|^2  \big(1+\tau^2)^{M+n+1}   d\tau\right)^{1/2}
    \left(\int_{\R} \big(1+\tau^2)^{-1}  d\tau\right)^{1/2}\nonumber\\
  &\leq&    C  j^{-M}  \big(\sqrt[m]{R} r_B)^{-(M+n)}   \|G\|_{W^{M+n+1}_2}.
\end{eqnarray*}
 However, supp $F\subseteq [-R, R]$ and supp $\delta_R F\subseteq  [-1, 1]$ so
 $$
  \|G\|_{W^{M+n+1}_2} \leq C \|\delta_R F\|_{W^{M+n+1}_2}.
 $$
This completes the proof of Lemma \ref{le2.2}.
\end{proof}

 The proof of Theorem 1.1 relies on  the following off-diagonal estimates for
 $  e^{itL}{  \phi}_k( {L})$, where $\phi\in C_0^{\infty}([{1/4}, 4])$ is a cut-off function
 and  $\phi_k(s)=\phi(2^{-k} s)$ for every $k\geq 1$.

 \begin{prop}\label{prop3.3}
Let $m\geq 2$ and  $L$ satisfies the   Davies-Gaffney  estimates \eqref{DG}.
 For every  $M>0$,   $K\geq 1$, $s>0$, $t\in{\mathbb R}$ and $ k\geq 1$,  there exists  a  constant
$C=C(M, n, K)$ independent of   $t, s$, and $k$  such that
\begin{eqnarray}\label{e3.10}
 \big\|P_{B_1}  (I-e^{-sL} )^K e^{itL}{  \phi}_k( {L})    P_{B_2} f\big\|_2
 \leq C   \left(1+{d(B_1, B_2)\over  2^{(m-1)k/m}(1+|t|)}\right)^{-M} \|P_{B_2} f  \|_2
\end{eqnarray}
  for all $  B_i\subset X$ with  $r_{B_1}=r_{B_2}\geq c2^{(m-1)k/m}(1+|t|)$ for some $c\geq 1/4$, and $d(B_1, B_2)\geq 6 r_{B_1}$.
\end{prop}

\smallskip
To prove Proposition~\ref{prop3.3}, we need  the following Lemmas~\ref{le2.3} and ~\ref{le2.4}.

 \begin{lemma}\label{le2.3}
 Let $m\geq 2$ and  $L$ satisfies the   Davies-Gaffney  estimates \eqref{DG}. Then
 for every  $M>0,$ $  k\in {\mathbb N}^+$ and $t\in{\mathbb R}$,  there exists  a  constant
$C=C(M, m, n)$ independent of  $t$ and $k $   such that for every $j=2,3,\ldots$
\begin{eqnarray}\label{le2.3 e1}
 \big\|P_B   e^{-(2^{-k}-it)L}  P_{A(x_B, r_B, j)} f\big\|_2
 \leq C   j^{-M}  \left(1+\frac{r_B}{ 2^{(m-1)k/m}(1 + |t| )}\right)^{-M} \|P_{A(x_B, r_B, j)} f  \|_2
\end{eqnarray}
 for all balls $B\subset X$ with  $r_B\geq c2^{(m-1)k/m}(1+|t|)$ for some $c\geq 1/4$.

  As a consequence,  we have
\begin{eqnarray*}
 \big\|P_B   e^{-(2^{-k}-it)L}    P_{X\backslash 2B} f\big\|_2
 \leq C     \mu(B)^{1/2}{\mathfrak  M}_2(f)(x)
\end{eqnarray*}
for all balls $B\subset X$ with  $r_B\geq c2^{(m-1)k/m}(1+|t|)$ for some $c>1/4$ and for every $x\in B$.
\end{lemma}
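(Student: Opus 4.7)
\medskip
\noindent\textit{Proof plan for Lemma~\ref{le2.3}.} The natural strategy is to read the statement as the special case of Lemma~\ref{le2.1}(ii) in which $p=q=2$ and the complex time is $z=2^{-k}-it$. With this choice $\mathrm{Re}\,z=2^{-k}$ and $|z|=\sqrt{2^{-2k}+t^{2}}$, so
\[
r_{z}=(\mathrm{Re}\,z)^{\frac{1}{m}-1}|z|
=2^{(m-1)k/m}\sqrt{2^{-2k}+t^{2}}
\le 2^{(m-1)k/m}\bigl(2^{-k}+|t|\bigr)
\le 2^{(m-1)k/m}(1+|t|),
\]
where the last step uses $2^{-k}\le 1$. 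Since \eqref{DG} is exactly the $p=q=2$ case of the hypothesis of Lemma~\ref{le2.1}, applying part~(ii) with $r=r_{B}$ and annulus index $j$ produces
\[
\bigl\|P_{B}\,e^{-(2^{-k}-it)L}P_{A(x_{B},r_{B},j)}\bigr\|_{2\to 2}
\;\le\;C\,j^{n}\exp\!\left(-c\Bigl(\tfrac{r_{B}j}{r_{z}}\Bigr)^{m/(m-1)}\right),
\]
which is the only input from Section~2.1.

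The core of the argument is then converting this exponential decay into the two separate polynomial factors $j^{-M}$ and $\bigl(1+r_{B}/(2^{(m-1)k/m}(1+|t|))\bigr)^{-M}$. Set $A:=r_{B}/r_{z}$. By the hypothesis $r_{B}\ge c\,2^{(m-1)k/m}(1+|t|)$ together with the bound on $r_{z}$ above, we have $A\ge c\ge 1/4$, so $Aj\ge 1/2$ for $j\ge 2$. Hence the elementary inequality $e^{-c\alpha^{m/(m-1)}}\le C_{M}\alpha^{-M-n}$ (valid for $\alpha\ge 1/2$) gives
\[
j^{n}\exp\!\left(-c(Aj)^{m/(m-1)}\right)
\le C_{M}\,j^{n}(Aj)^{-M-n}
= C_{M}\,A^{-M-n}j^{-M}
\le C_{M}'\,A^{-M}j^{-M},
\]
after absorbing $A^{-n}\le 4^{n}$ into the constant. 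Finally, since $r_{z}\le 2^{(m-1)k/m}(1+|t|)$,
\[
A^{-M}=\bigl(r_{z}/r_{B}\bigr)^{M}
\le \Bigl(\tfrac{2^{(m-1)k/m}(1+|t|)}{r_{B}}\Bigr)^{M},
\]
and because $r_{B}/(2^{(m-1)k/m}(1+|t|))\ge c$, this last expression is comparable to $\bigl(1+r_{B}/(2^{(m-1)k/m}(1+|t|))\bigr)^{-M}$, which yields \eqref{le2.3 e1}.

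For the stated consequence, I decompose $X\setminus 2B\subset\bigcup_{j\ge 2}A(x_{B},r_{B},j)$ and sum \eqref{le2.3 e1}. For each $x\in B$ one has $B(x_{B},(j+1)r_{B})\subseteq B(x,(j+2)r_{B})$, so by doubling
\[
\|P_{A(x_{B},r_{B},j)}f\|_{2}\le V(x,(j+2)r_{B})^{1/2}\,\mathfrak{M}_{2}f(x)\le C(j+2)^{n/2}\mu(B)^{1/2}\,\mathfrak{M}_{2}f(x),
\]
and plugging this into the sum $\sum_{j\ge 2}j^{-M}(j+2)^{n/2}$ (convergent once $M>n/2+1$) gives the stated bound, with the factor $(1+r_{B}/(2^{(m-1)k/m}(1+|t|)))^{-M}$ simply discarded (it is $\le 1$).

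The only delicate point is the factorisation step: one cannot naively split $e^{-c(Aj)^{m/(m-1)}}$ into a product of exponentials in $A$ and in $j$, and one must instead first pass to polynomial decay and then separate $A^{-M}$ from $j^{-M}$ using the quantitative lower bound $A\ge 1/4$ supplied by the hypothesis $r_{B}\ge c\,2^{(m-1)k/m}(1+|t|)$. This is precisely why that lower bound on $r_{B}$ appears in the statement.
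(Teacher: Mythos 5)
Your proof is correct and follows essentially the same route as the paper: apply Lemma~\ref{le2.1}(ii) with $z=2^{-k}-it$, compute $r_z=2^{(m-1)k/m}\sqrt{2^{-2k}+t^2}$, use the hypothesis $r_B\ge c\,2^{(m-1)k/m}(1+|t|)$ to bound the exponential by the factored polynomial decay in $j$ and in $r_B/(2^{(m-1)k/m}(1+|t|))$, and then sum over annuli using the doubling bound $\|P_{A(x_B,r_B,j)}f\|_2\le C(j+1)^{n/2}\mu(B)^{1/2}\mathfrak{M}_2 f(x)$. The only cosmetic difference is that you introduce the intermediate ratio $A=r_B/r_z$ and split off $A^{-M-n}$ before recombining, whereas the paper passes directly from the exponential to $j^{-M+n}(1+r_B/(2^{(m-1)k/m}(1+|t|)))^{-M}$; the substance is the same.
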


\begin{proof}
Note that
 \begin{eqnarray*}
 \|P_B   e^{-(2^{-k}-it)L} P_{X\backslash 2B} f \|_2  &\le&
    \sum_{j=2}^{\infty}
 \|P_{B}   e^{-zL} P_{A(x_B, r_{B }, j)} f  \|_2
\end{eqnarray*}
with  $z=(2^{-k}-it)$. It is clear that ${\rm Re}\, z= 2^{-k}>0$, and so $r_z=({\rm Re}\, z)^{{1\over m}-1}|z|=2^{(m-1)k/m}\sqrt{|t|^2 + 2^{-2k}}.$
By (ii) of Lemma~\ref{le2.1}, we see that  for every ball $ B\subset X$ with  $r_B\geq  2^{(m-1)k/m}(1+|t|), k\geq 0$,
\begin{eqnarray} \label{e2.4}
 \big\|P_B   e^{-(2^{-k}-it)L} P_{A(x_B, r_{B }, j)}\big\|_{2 \to 2}
 &\leq &C j^n
  \exp\left(-c  \left({ r_B j \over    2^{(m-1)k/m}\sqrt{ 2^{-2k} + |t|^2 }} \right)^{m\over m-1} \right)\nonumber\\
 & \leq&  C_{M}  j^{-M+n}\left(1+\frac{r_B}{ 2^{(m-1)k/m}(1 + |t| )}\right)^{-M}
\end{eqnarray}
for every $M>0$. Hence, \eqref{le2.3 e1} holds.
This, in combination with the fact that for every $x\in B$,
\begin{eqnarray} \label{e2.5}
 \|P_{A(x_B, r_{B }, j)}f  \|_2 &\leq&  \mu((j+1)B)^{1/2} \left(\fint_{(j+1)B }|  f(y)|^2 d\mu(y)\right)^{1/2}
 \nonumber\\
 &\leq& C (j+1)^{n/2} \mu(B)^{1/2}{\mathfrak  M}_2\left(  f \right)(x),
\end{eqnarray}
 yields that
 \begin{eqnarray*}
  \|P_B   e^{-(2^{-k}-it)L} P_{X\backslash 2B} f \|_2
  &\le&
    C \sum_{j=2}^{\infty} j^{-(M-{3n\over 2})}  \mu(B)^{1/2}{\mathfrak  M}_2\left(  f \right)(x)\\
  &\le&
    C    \mu(B)^{1/2}{\mathfrak  M}_2\left(  f \right)(x)
\end{eqnarray*}
as long as we choose $M>3n/2$ in \eqref{e2.4}. This proves Lemma~\ref{le2.3}.
\end{proof}

 \begin{lemma}\label{le2.4}  Let $m\geq 2$ and  $L$ satisfies the  Davies-Gaffney  estimates \eqref{DG}.
For a given  $\phi\in C_0^{\infty}([{1\over4}, 4])$, we write  $\phi_e(\lambda)=e^{\lambda} \phi(\lambda).$
  Then
for  every   $M>0, k\in {\mathbb N}^+$ and $s>0$,  there exists  a  constant
$C=C(m, n,  M)$ independent of   $k $ and $s$ such that   for every $j=2,3,\ldots$
\begin{eqnarray*}
 \big\|P_B   (I-e^{-sL} )^K  {\phi}_e( 2^{-k}{L}) P_{A(x_B, r_B, j)} f\big\|_2
 \leq C   j^{-M} \big(2^{k/m} r_{B }\big)^{-M-n} \|P_{A(x_B, r_B, j)} f  \|_2
\end{eqnarray*}
 for all $ B\subset X$ with  $r_B\geq c2^{(m-1)k/m}$ for some $c\geq 1/4$.

  As a consequence,  we have
\begin{eqnarray*}
 \big\|P_B   (I-e^{-sL} )^K  {\phi}_e( 2^{-k}{L})   P_{X\backslash 2B} f\big\|_2
 \leq C    \mu(B)^{1/2}{\mathfrak  M}_2(f)(x).
\end{eqnarray*}
\end{lemma}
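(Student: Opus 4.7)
The plan is to apply Lemma~\ref{le2.2} to the spectral multiplier
$$F(\lambda) = (1-e^{-s\lambda})^K \phi_e(2^{-k}\lambda),$$
exploiting the fact that $(I-e^{-sL})^K \phi_e(2^{-k}L) = F(L)$. Since $\phi \in C_0^{\infty}([1/4, 4])$, the function $F$ is supported in $[2^{k-2}, 2^{k+2}]$, so we choose $R = 2^{k+2}$. Lemma~\ref{le2.2} then yields
$$\|P_B F(L) P_{A(x_B, r_B, j)}\|_{2\to 2} \leq C j^{-M}\bigl(\sqrt[m]{R}\, r_B\bigr)^{-(M+n)} \|\delta_R F\|_{W_2^{M+n+1}},$$
and since $\sqrt[m]{R} = 2^{(k+2)/m} \sim 2^{k/m}$, the claimed geometric factor $(2^{k/m} r_B)^{-(M+n)}$ appears automatically. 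The whole game therefore reduces to bounding $\|\delta_R F\|_{W_2^{M+n+1}}$ by a constant independent of $s$ and $k$.

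The rescaled multiplier is $\delta_R F(\lambda) = (1-e^{-sR\lambda})^K \phi_e(4\lambda)$, supported in $[1/16, 1]$. The factor $\phi_e(4\lambda)$ is a fixed element of $C_0^\infty$, so it contributes a harmless constant. For the factor $g_u(\lambda) := (1-e^{-u\lambda})^K$ with $u = sR > 0$, I expand binomially:
$$g_u(\lambda) = \sum_{\ell=0}^{K} \binom{K}{\ell}(-1)^\ell e^{-u\ell\lambda},$$
so that $g_u^{(j)}(\lambda) = \sum_\ell \binom{K}{\ell}(-1)^\ell (-u\ell)^j e^{-u\ell\lambda}$. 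On $\lambda \in [1/16, 1]$ each term is controlled by $(u\ell)^j e^{-u\ell/16}$, which is bounded uniformly in $u > 0$ (the exponential beats the polynomial). Hence $\|g_u\|_{C^N([1/16,1])} \leq C_{N,K}$ uniformly in $s,k$, and Leibniz gives the desired uniform bound $\|\delta_R F\|_{W_2^{M+n+1}} \leq C$. This is really the only place where uniformity in the semigroup parameter $s$ has to be checked, and it is the step I expect to be the mildly delicate one — everything else is a direct appeal to Lemma~\ref{le2.2}.

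For the ``as a consequence'' statement, I decompose $X \setminus 2B = \bigcup_{j\geq 2} A(x_B, r_B, j)$ and sum over $j$, exactly as in the proof of Lemma~\ref{le2.3}. Applying \eqref{e2.5} to bound $\|P_{A(x_B, r_B, j)} f\|_2$ by $C(j+1)^{n/2}\mu(B)^{1/2}\mathfrak{M}_2(f)(x)$ for $x \in B$, and using that $r_B \geq c\, 2^{(m-1)k/m}$ forces $2^{k/m} r_B \geq c\,2^k \geq c$, we obtain
$$\|P_B(I - e^{-sL})^K \phi_e(2^{-k}L) P_{X\setminus 2B} f\|_2 \leq C \mu(B)^{1/2} \mathfrak{M}_2(f)(x) \sum_{j \geq 2} j^{-M + n/2},$$
and the series converges provided $M > n/2 + 1$. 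Choosing $M$ accordingly completes the proof.
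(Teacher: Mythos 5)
Your proof is correct and follows the same route as the paper: apply Lemma~\ref{le2.2} to $F(\lambda)=(1-e^{-s\lambda})^K\phi_e(2^{-k}\lambda)$ with $R=2^{k+2}$, verify that the rescaled Sobolev norm $\|\delta_R F\|_{W_2^{M+n+1}}$ is bounded uniformly in $s,k$, then decompose $X\setminus 2B$ into annuli and sum using \eqref{e2.5}. Your binomial-expansion check that $(1-e^{-u\lambda})^K$ has uniformly bounded derivatives on $[1/16,1]$ for $u>0$ is exactly the content the paper asserts in one line, and your convergence condition $M>n/2+1$ is the (slightly more careful) correct one.
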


\begin{proof}
We write
 \begin{eqnarray*}
 \|P_B (I-e^{-sL})^K  {\phi}_e( 2^{-k}{L})  P_{X\backslash 2B} f \|_2 &\le&
    \sum_{j=2}^{\infty}
 \|P_{B}   (1- e^{-sL})^K {\phi}_e( 2^{-k}{L}) P_{A(x_B, r_{B }, j)} f  \|_2.
\end{eqnarray*}
Note that the function
$ (1- e^{-s\lambda})^K e^{ 2^{-k}\lambda}\phi_k(\lambda) $ is supported in $  [2^{k-2}, \ 2^{k+2}]$. We apply Lemma~\ref{le2.2}  with $R=2^{k+2}$
to obtain that for every $M>0$ and $j\geq 2$,
 \begin{eqnarray} \label{e2.6}
 \big\|P_{B }  (1- e^{-s L})^K  {\phi}_e( 2^{-k}{L}) P_{A(x_B, r_{B }, j)}\big\|_{2\to 2}
 &\leq&    C j^{-M} \big(2^{k/m} r_{B }\big)^{-M-n} \|\delta_{2^{k+2}}\big( (1- e^{-s\lambda})^Ke^{ 2^{-k}\lambda}
 \phi_k(\lambda) \big) \|_{  W^{M+n+1}_2}\nonumber\\
 &\leq&   C j^{-M} \big(2^{k/m} r_{B }\big)^{-M-n} \|  (1- e^{-2^{ (k+2)}s\lambda})^K e^{ 4\lambda}\phi(4\lambda)  \|_{  W^{M+n+1}_2}
 \nonumber\\
 &\leq&   C j^{-M}\big(2^{k/m} r_{B }\big)^{-M-n} .
\end{eqnarray}
This, in combination with  \eqref{e2.5},
yields that for every $x\in B$,
 \begin{eqnarray*}
 \|P_B (I-e^{-sL})^K  {\phi}_e( 2^{-k}{L})  P_{X\backslash 2B} f \|_2
  &\le&
    C\sum_{j=2}^{\infty}  j^{-(M-{n\over 2})} \big(2^{k/m} r_{B }\big)^{-M-n}  \mu(B)^{1/2}{\mathfrak  M}_2\left( f \right)(x)\\
  &\le&
    C  \mu(B)^{1/2}{\mathfrak  M}_2\left( f \right)(x)
\end{eqnarray*}
as long as we choose $M>n/2$ in the first inequality above and notice the fact that $2^{k/m}r_B\geq 1/4$.  This proves Lemma~\ref{le2.4}.
\end{proof}

\medskip

\begin{proof}[Proof of Proposition~\ref{prop3.3}]
Let us  show \eqref{e3.10} when  $d(B_1, B_2)\geq 6 r_{B_1}$.
 By spectral theory, we write
\begin{eqnarray*}
(I-e^{-sL} )^K e^{itL}{  \phi}_k({L}) &= &
e^{-(2^{-k}-it)L} \big[  (I-e^{-sL} )^K  {  \phi}_e(2^{-k}{L})  \big]=S_{k, t}(L) T_k(L)
\end{eqnarray*}
where we write  $\phi_e(\lambda)=e^{\lambda} \phi(\lambda)$,
$$
S_{k, t}(L)=e^{-(2^{-k}-it)L}
$$
and
$$ T_k(L)= (I-e^{-sL} )^K  {\phi}_e( 2^{-k}{L}).
$$
Set $G=\{ x: {\rm dist} (x, B_1) \leq d(B_1, B_2)/2 \}$.
Then it is clear that ${\rm dist} (\, B_2, {\bar G} )\geq d(B_1, B_2)/2$,
where we use ${\bar G}$ to denote the topological closure of the set $G$. Moreover, from the definition of $G$, it is also clear that ${\rm dist} (X\backslash G, B_1)\geq
d(B_1, B_2)/3.$
Furthermore, based on the above observations we have
$$
G \subset \bigcup_{j= \big\lfloor {d(B_1,B_2)\over 2r_{B_2}} \big\rfloor}^{ \big\lfloor 2+{d(B_1,B_2)\over  r_{B_2}}\big\rfloor +1}A(x_{B_2}, r_{B_2}, j)
\quad{\rm\ and\ }\quad X\backslash G\subset   \bigcup_{j= \big\lfloor {d(B_1,B_2)\over 2r_{B_1}} \big\rfloor -1}^\infty A(x_{B_1}, r_{B_1}, j),
$$
where $\lfloor a\rfloor$ denotes the greatest integer that is smaller than $a$.

 Then by noting that $S_{k, t}(L)$ is uniformly bounded on $L^2(X)$ and by Lemma~\ref{le2.4},
 \begin{eqnarray}\label{e3.12}
\big\|P_{B_1} S_{k, t}(L) \left( P_G T_{k}(L) P_{B_2} f  \right)\big\|_{2}
&\le&  \big \| S_{k, t}(L) \left( P_G T_{k}(L) P_{B_2} f  \right)\big\|_{2} \nonumber\\
&\le& C\big \|   P_G T_{k}(L) P_{B_2} f  \big\|_{2} \nonumber\\
&\le& C \sum_{j=  \lfloor {d(B_1,B_2)/ ( 2r_{B_2})}  \rfloor}^{  \lfloor 2+{d(B_1,B_2)/r_{B_2}} \rfloor +1}  \big \|   P_{A(x_{B_2}, r_{B_2}, j)} T_{k}(L) P_{B_2} f  \big\|_{2}\nonumber\\
  &\le& C\sum_{j=  \lfloor {d(B_1,B_2)/(2r_{B_2})}  \rfloor}^{  \lfloor 2+{d(B_1,B_2)/r_{B_2}} \rfloor +1}  j^{-M} r_{B_2}^{-M}
  \big \|    P_{B_2} f  \big\|_{2}
 \nonumber\\
&\le&
    C  \left(1+{d(B_1, B_2)\over r_{B_2}}\right)^{-M+1}r_{B_2}^{-M} \left\|P_{B_2} f  \right\|_2\nonumber\\
&\le&
    C 
    \left(1+{d(B_1, B_2)\over 2^{(m-1)k/m}(1+|t|)}\right)^{-M+1} \left\|P_{B_2} f  \right\|_2
\end{eqnarray}
for any $M>0$,  
where in the last inequality we use the facts that $r_{B_2}\geq 1/4$ and that $d(B_1, B_2)> 2^{k(m-1)}(1+|t|)$.

On the other hand,
we apply  Lemma~\ref{le2.3} and the fact that  $T_{k}(L) $ is uniformly bounded on $L^2(X)$ to see that for every $M>0$,
 \begin{eqnarray}\label{e3.13}
\big\|P_{B_1}  S_{k, t}(L) \left( P_{X\backslash G} T_{k}(L) P_{B_2} f  \right)\big\|_{2}
&\le& \sum_{j= \lfloor {d(B_1,B_2)/ (2r_{B_1})} \rfloor -1}^{ \infty}  \big\|P_{B_1}  S_{k, t}(L) P_{A(x_{B_1}, r_{B_1}, j)}\left(  T_{k}(L) P_{B_2} f  \right)\big\|_{2} \nonumber\\
&\le& \sum_{j= \lfloor {d(B_1,B_2)/ (2r_{B_1})} \rfloor -1}^{ \infty}  j^{-M}\left(1+\frac{r_{B_1}}{ 2^{(m-1)k/m}(1 + |t| )}\right)^{-M}\big\| T_{k}(L) P_{B_2} f   \big\|_{2} \nonumber\\
&\le&
    C  \left(1+{d(B_1, B_2)\over 2^{(m-1)k/m}(1+|t|)}\right)^{-M} \left\|P_{B_2} f  \right\|_2
\end{eqnarray}
 Therefore,  we combine the estimates \eqref{e3.12} and \eqref{e3.13} to obtain that for every $M>0$,
 \begin{eqnarray*}\label{e8.2}
 \big\|P_{B_1} S_{k, t}(L)   \left(T_{k}(L) P_{B_2} \right) \big\|_2
 &\leq& \big \|P_{B_1}  S_{k, t}(L) \left( P_G T_{k, t}(L) P_{B_2}  f  \right)\big\|_{2}\\
&&+\big\|P_{B_1}  S_{k, t}(L) \left( P_{X\backslash G} T_{k}(L) P_{B_2}  f  \right)\big\|_{2}\nonumber\\
   &\le&
    C  \left(1+{d(B_1, B_2)\over 2^{(m-1)k/m}(1+|t|)}\right)^{-M} \big\|P_{B_2} f \big\|_2,
 \end{eqnarray*}
which shows that   \eqref{e3.10} holds.
 The proof of Proposition~\ref{prop3.3} is complete.
\end{proof}

\medskip

In order to prove Theorem~\ref{th1.2}, we also need the following estimate for the operator $e^{itL}{  \phi}_k( {tL}), t>0$.
Recall that   $\phi\in C_0^{\infty}([{1/4}, 4])$ is  a cut-off function
 and  $\phi_k(s)=\phi(2^{-k} s)$ for every $k\geq 1$.

\begin{prop} \label{prop13.33}
  Let $m\geq 2$ and  $L$ satisfies the   Davies-Gaffney  estimates \eqref{DG}. For every  $M>0$,   $K\in{\mathbb N}^+$,
  $s>0, t>0 $ and $ k\geq 1$,  there exists  a  constant
$C=C(M, n, K)$ independent of   $t, s$, and $k$  such that
\begin{eqnarray*}
 \big\|P_{B_1}  (I-e^{-sL} )^K e^{itL}{  \phi}_k( {tL})    P_{B_2} f\big\|_2
 \leq C   \left(1+{d(B_1, B_2)\over  2^{(m-1)k/m}t^{1/m}}\right)^{-M} \|P_{B_2} f  \|_2
\end{eqnarray*}
  for all $  B_i\subset X$ with  $r_{B_1}=r_{B_2}\geq c2^{(m-1)k/m}t^{1/m}$ for some $c\geq 1/4$.
\end{prop}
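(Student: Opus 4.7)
My plan is to follow the same scheme as Proposition~\ref{prop3.3}, replacing the former spatial scale $2^{(m-1)k/m}(1+|t|)$ by the new scale $a := 2^{(m-1)k/m}\,t^{1/m}$. First, I would reduce to the case $d(B_1,B_2)\geq 2^{10} a$, since otherwise the right-hand side is bounded below by a positive constant and the bound follows from $L^2$-boundedness of the composition. Next, writing $\phi_e(\lambda)=e^\lambda\phi(\lambda)$, I would factor
\[
(I-e^{-sL})^K e^{itL}\phi_k(tL) = e^{-zL}\cdot (I-e^{-sL})^K\phi_e(2^{-k}tL) =: \tilde S_{k,t}(L)\,\tilde T_k(L),
\]
with $z=2^{-k}t-it$. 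Then ${\rm Re}\, z = 2^{-k}t$, $|z| = t\sqrt{1+2^{-2k}}$, so $r_z=({\rm Re}\, z)^{1/m-1}|z|\asymp a$, which sets the decay scale for $\tilde S_{k,t}$.

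The next step is to establish the direct analogues of Lemmas~\ref{le2.3} and~\ref{le2.4} with this new scale. Applying Lemma~\ref{le2.1}(ii) to $z=2^{-k}t-it$ gives, for every $M>0$ and $j\geq 2$,
\[
\|P_B\,\tilde S_{k,t}(L)\,P_{A(x_B,r_B,j)}\|_{2\to 2}\leq C j^{-M}(1+r_B/a)^{-M}.
\]
For $\tilde T_k$, note that $(1-e^{-s\lambda})^K e^{2^{-k}t\lambda}\phi(2^{-k}t\lambda)$ is spectrally supported in $[2^{k-2}/t,2^{k+2}/t]$, so applying Lemma~\ref{le2.2} with $R=2^{k+2}/t$ gives
\[
\|P_B\,\tilde T_k(L)\,P_{A(x_B,r_B,j)}\|_{2\to 2}\leq C j^{-M}(2^{k/m}t^{-1/m}r_B)^{-M-n}.
\]

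With these two bounds in hand, I introduce $G = \{x:{\rm dist}(x,B_1)\leq d(B_1,B_2)/2\}$, so that ${\rm dist}(B_2,\bar G)\geq d(B_1,B_2)/2$ and ${\rm dist}(X\setminus G,B_1)\geq d(B_1,B_2)/3$, and decompose
\[
P_{B_1}\tilde S_{k,t}\tilde T_k P_{B_2}=P_{B_1}\tilde S_{k,t}P_G\tilde T_k P_{B_2}+P_{B_1}\tilde S_{k,t}P_{X\setminus G}\tilde T_k P_{B_2}.
\]
For the first piece, use $L^2$-boundedness of $\tilde S_{k,t}$, cover $G$ by annuli $A(x_{B_2},r_{B_2},j)$ with $j\gtrsim d(B_1,B_2)/r_{B_2}$, and sum the $\tilde T_k$ estimates. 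For the second piece, use $L^2$-boundedness of $\tilde T_k$, cover $X\setminus G$ by annuli $A(x_{B_1},r_{B_1},j)$ with $j\gtrsim d(B_1,B_2)/r_{B_1}$, and sum the $\tilde S_{k,t}$ estimates.

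The main obstacle I anticipate is the same bookkeeping issue that arises in the proof of Proposition~\ref{prop3.3}: after summing, one obtains $(d(B_1,B_2)/r_{B_i})^{-M+1}$ times an extra decay factor, and this must be rewritten in terms of $(d(B_1,B_2)/a)^{-M}$, which is delicate in the regime $r_{B_i}\gg a$. The key algebraic identity $2^{k/m}t^{-1/m} = 2^k/a$, together with the hypothesis $r_{B_i}\geq ca$, turns the factor $(2^{k/m}t^{-1/m}r_{B_i})^{-M-n}$ into $C\,2^{-k(M+n)}(a/r_{B_i})^{M+n}$, which absorbs the mismatch; the analogous absorption works for the $\tilde S_{k,t}$ piece via $(1+r_{B_1}/a)^{-M}$. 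Starting the argument with $M+1$ instead of $M$ in the lemmas above then produces the desired bound $(1+d(B_1,B_2)/a)^{-M}$.
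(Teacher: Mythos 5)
Your proposal is correct and follows exactly the approach the paper intends: the paper itself only says Proposition~\ref{prop13.33} ``can be obtained by making minor modifications'' to the proof of Proposition~\ref{prop3.3}, and you have correctly identified those modifications — the factorization with $z=2^{-k}t-it$ (so $r_z\asymp 2^{(m-1)k/m}t^{1/m}$), the application of Lemma~\ref{le2.2} with $R=2^{k+2}/t$, and the same $G$ / $X\setminus G$ decomposition. The algebraic bookkeeping you flag ($2^{k/m}t^{-1/m}=2^k/a$ together with $r_{B_i}\geq ca$) is precisely what replaces the use of $r_{B_i}\geq 1/4$ and $d(B_1,B_2)>2^{(m-1)k/m}(1+|t|)$ in the last line of \eqref{e3.12}, so the absorption goes through as you describe.
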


\begin{proof}
 The proof of Proposition~\ref{prop13.33} can be obtained by making minor modifications with the proof of Proposition~\ref{prop3.3},
 we leave the detail to the reader.
 \end{proof}

\medskip

\noindent
{\bf 2.2. Spectral multipliers}.
The following result is a standard known result in the theory of spectral multipliers of non-negative selfadjoint
operators.

\begin{prop}\label{prop2.5}
Let $m\geq 2$. Suppose that $(X, d, \mu)$ is a space of homogeneous type with a dimension $n$.
 Suppose that $L$
satisfies the property \eqref{GGE}  for some
  $1\leq p_0< 2$. Then we have
\begin{itemize}
\item[(a)]
Assume in addition that $F$ is  an even bounded Borel function  such that
 $
\sup_{R>0}\|\eta\delta_RF\|_{C^\beta}<\infty
 $
for some integer $\beta> n/2 + 1$ and some non-trivial function $\eta\in C_0^{\infty}(0, \infty)$.
Then the operator  $F(L)$ is bounded on $L^{p}(X)$ for all $p_0<p<p_0'$,
\begin{eqnarray}\label{n}
\|F({L})\|_{p\to p}\leq C_\beta\left(\sup_{R>0}\|\eta\delta_RF\|_{C^\beta}+F(0)\right).
\end{eqnarray}

\item[(b)]
Fix a non-zero $C^{\infty}$ bump function  $\varphi$ on $\mathbb R$ such that
$
{\rm supp} \, \phi \subseteq ({1/2}, 2) $  for all $\lambda>0$
and set $\varphi_{0}(\lambda)=\sum_{\ell\leq 0}\varphi(2^{-\ell}\lambda)$ and
$\varphi_{k}(\lambda)=\varphi(2^{-k}\lambda)$ for $k=1, 2, \cdots$. Then
for all $p_0<p<p_0'$,
\begin{eqnarray}\label{nn}
\Big\|\Big(\sum_{k=0}^{\infty}\big|\varphi_k({L})f\big|^2\Big)^{1/2} \Big\|_p\leq C_p\|f\|_p.
\end{eqnarray}
In addition, if   $ \sum_{k\geq 0}\varphi_k(\lambda)=1 $ for all $\lambda>0$, then we have
\begin{eqnarray}\label{xx}
\|f\|_p\cong C_p \Big\|\Big(\sum_{k=0}^{\infty}\big|\varphi_k({L})f\big|^2\Big)^{1/2} \Big\|_p, \ \ \   p_0<p<p_0'.
\end{eqnarray}
\end{itemize}
\end{prop}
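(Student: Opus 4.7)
The plan is to treat both parts as instances of the now-classical spectral multiplier machinery adapted to generalized Gaussian estimates, as developed in Blunck--Kunstmann \cite{BK2}, \cite{Blu}, \cite{KU} and Duong--Ouhabaz--Sikora \cite{DOS}. Part (a) is the generalized-Gaussian spectral multiplier theorem in the range $(p_0,p_0')$; part (b) will be deduced from (a) via a standard Khintchine/randomisation argument.

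For part (a), I would fix a non-trivial $\eta\in C_c^{\infty}(0,\infty)$ with $\sum_{k\in\mathbb Z}\eta(2^{-k}\lambda)\equiv 1$ on $(0,\infty)$, and write
\[
F(\lambda)=F(0)+\sum_{k\in\mathbb Z} F_k(\lambda),\qquad F_k(\lambda):=F(\lambda)\,\eta(2^{-k}\lambda),
\]
so that each $F_k$ is supported in the dyadic annulus $[2^{k-1},2^{k+2}]$ (after enlarging $\eta$ slightly), and the hypothesis gives $\|\delta_{2^k}F_k\|_{C^\beta}\lesssim \sup_{R}\|\eta\delta_R F\|_{C^\beta}$ uniformly in $k$. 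Lemma~\ref{le2.2} then yields $L^2\to L^2$ off-diagonal decay for $P_B F_k(L) P_{A(x_B,r_B,j)}$ with the correct dependence on $2^{k/m}r_B$. Combining this with the generalized Gaussian $L^{p_0}\to L^{p_0'}$ bound from Lemma~\ref{le2.1}(ii), one obtains $L^{p_0}\to L^{p_0'}$ off-diagonal estimates for $F_k(L)$ with polynomial decay, which by the Blunck--Kunstmann $L^p$ extrapolation/weak-type argument (a Calder\'on--Zygmund decomposition adapted to the heat semigroup) give a weak-type $(p_0,p_0)$ bound for $F(L)$, hence boundedness on $L^p$ for $p_0<p<2$ by Marcinkiewicz interpolation with the $L^2$ bound from the spectral theorem. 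Duality (the hypothesis on $F$ is self-dual) covers the range $2<p<p_0'$, giving \eqref{n}.

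For part (b), apply (a) to the randomised multiplier $F_\epsilon(\lambda):=\sum_{k\geq 0}\epsilon_k\varphi_k(\lambda)$, where $(\epsilon_k)$ is any choice of signs $\pm 1$. Because $\varphi$ is compactly supported in $(1/2,2)$, on each dyadic scale only finitely many of the $\varphi_k$ overlap, so $\sup_{R>0}\|\eta\delta_R F_\epsilon\|_{C^\beta}$ is bounded by a constant independent of $(\epsilon_k)$. Thus (a) gives $\|F_\epsilon(L)f\|_p\leq C_p\|f\|_p$ uniformly in $\epsilon$, and integrating over the signs using Khintchine's inequality yields the one-sided Littlewood--Paley bound \eqref{nn}. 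The two-sided equivalence \eqref{xx}, when $\sum_k\varphi(2^{-k}\lambda)\equiv 1$, follows by a standard duality/polarisation pairing: for $f\in L^p$, $g\in L^{p'}$,
\[
|\langle f,g\rangle|=\Bigl|\sum_k\langle\varphi_k(L)f,\widetilde\varphi_k(L)g\rangle\Bigr|\leq \bigl\|\bigl(\textstyle\sum_k|\varphi_k(L)f|^2\bigr)^{1/2}\bigr\|_p\,\bigl\|\bigl(\textstyle\sum_k|\widetilde\varphi_k(L)g|^2\bigr)^{1/2}\bigr\|_{p'},
\]
where $\widetilde\varphi$ is a slight enlargement with $\widetilde\varphi\varphi=\varphi$, and the second factor is controlled by \eqref{nn} applied in $L^{p'}$ (which is in the range since $p_0<p'<p_0'$). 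The one delicate technical point, which is the main obstacle if one were to write out the argument in full, is the adapted Calder\'on--Zygmund/weak-type $(p_0,p_0)$ step in (a), where one must carry out the atomic decomposition of $f$ using $(I-e^{-r^mL})^K$ annihilators so that the off-diagonal $L^{p_0}\to L^{p_0'}$ bounds on the dyadic pieces $F_k(L)$ can be summed; this is exactly the argument of \cite[Theorem~1.1]{Bl}, and I would simply invoke it.
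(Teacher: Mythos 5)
Your proposal is correct and follows essentially the same route as the paper: part (a) is delegated to Blunck's spectral multiplier theorem under generalized Gaussian estimates, and part (b) is obtained by randomising the multiplier and applying Khintchine's inequality (the paper realises your $\pm 1$ signs concretely as Rademacher functions $r_k(t)$), with the lower bound in \eqref{xx} from the standard $\widetilde\varphi$-polarisation/duality trick plus $\sum_k\varphi_k(L)f=f$. One small slip: the multiplier theorem you need to invoke is \cite[Theorem 1.1]{Blu} (Blunck's H\"ormander-type spectral multiplier theorem without heat kernel), not \cite[Theorem 1.1]{Bl}, which is Blunck's Riesz-means paper.
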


\begin{proof}
Assertion (a) follows from \cite[Theorem 1.1]{Blu}, see also  \cite[Lemma 4.5]{COSY}.  The proof of assertion  (b) follows
from Stein's classical proof \cite[Chapter IV]{St2}. We give a brief argument
of this proof for completeness and convenience for the reader.

Let us introduce the Rademacher function, which is defined as follows: i) The function $r_0(t)$ is defined by $r_0(t)=1$
on $[0, 1/2]$ and $r_0(t)=-1$ on $(1/2, 1)$, and then extended to ${\mathbb R}$ by periodicity; ii) For $k\in{\mathbb N}\backslash \{0\},
r_k(t)=r_0(2^kt).$ Define
$$
F(t, \lambda)= \sum_{k=0}^{\infty} r_k(t) \varphi_k(\lambda).
$$
A straightforward computation shows that for every integer $\beta> n/2 + 1$, $\sup_{R>0}\|\eta F(t, R\lambda)\|_{C^\beta} \leq C_\beta$
uniformly in $t\in [0,1].$ Then we apply \eqref{n} to see  that for all $p\in (p_0, p'_0),$
\begin{eqnarray*}
\|F (t, {L}) f\|_p=\Big\|\sum_{k=0}^{\infty}r_k(t) \varphi_k({L})f \Big\|_p\leq C \|f\|_p
\end{eqnarray*}
with $C>0$  uniformly in $t\in [0,1].$ This, in combination with  the standard inequality for Rademacher functions:
\begin{eqnarray*}
\Big(\sum_{k=0}^{\infty}\left|\varphi_k({L})f\right|^2\Big)^{p/2} \cong \int_0^1\big|\sum_{k=0}^{\infty} r_k(t) \varphi_k({L})f \big|^p
dt,
\end{eqnarray*}
 yields
\begin{eqnarray*}
  \Big\|\Big(\sum_{k=0}^{\infty}\big|\varphi_k({L})f\big|^2\Big)^{1/2} \Big\|_p
  \cong \left(\int_0^1\Big\|\sum_{k=0}^{\infty}r_k(t) \varphi_k({L})f \Big\|_p^pdt\right)^{1/p} \leq C_p\|f\|_p.
\end{eqnarray*}
This proves \eqref{nn}.

When  $\sum_{k\geq 0}\varphi_k(\lambda)=1 $ for all $\lambda>0,$
it follows by
 the spectral theory \cite{Mc} that  $ \sum_{k\geq 0}\varphi_k(L)f=f $ for   every  $f\in L^2.$
From it,   we obtain \eqref{xx}  by using \eqref{nn} and    the standard duality argument (see for example, \cite[Chapter IV]{St2}).
This completes the proof of Proposition \ref{prop2.5}.
\end{proof}

\medskip

\section{Sharp endpoint $L^p$-Sobolev estimates for    Scr\"odinger groups}\label{sec3}
\setcounter{equation}{0}

In this section we  prove  \eqref{e1.5}   in Theorem~\ref{th1.1}.  First, we note that from \eqref{ek},
 estimate \eqref{e1.5}  holds for $s>n|{1/2}-{1/p}|.$
By  duality, it suffices to verify
\eqref{e1.5}     for $ 2\leq  p<p'_0$  and $s=n|{1/2}-{1/p}|.$
Also,    it follows  by the spectral theory \cite{Mc} that \eqref{e1.5} holds  for $p=2$.
For $p\not=2$, we recall  that when $L$ satisfies  the
  generalized Gaussian   estimates  \eqref{GGE}  for some
  $1\leq p_0< 2$, it was proved by Blunck \cite[Theorem 1.1]{Bl} that for every $z\in{\mathbb C^+},$
 \begin{eqnarray}\label{complex}
\|e^{-zL}\|_{p\to p} \leq C \left({|z|\over {\rm Re}\, z}\right)^{n|{1\over 2} -{1\over p}|}
\end{eqnarray}
for all $p\in [p_0, p'_0]$ with $p\not=\infty.$
From this, we   have the following sharp $L^p$ frequency truncated estimates for the Schr\"odinger group.

\begin{prop}\label{prop1.1}
Suppose  that $(X, d, \mu)$ is a space of homogeneous type  with a  dimension $n$. Suppose that $L$
satisfies the property \eqref{GGE}  for some
  $1\leq p_0< 2$.
Then for every $p\in (p_0, p'_0)$ and $ k\geq 0$,
 \begin{eqnarray}\label{e11.3}
\|e^{itL} \phi(2^{-k}L)f\|_{p}\leq C (1+2^k |t|)^{s }\|f\|_{p}, \ \ t\in{\mathbb R},
 \ \ \   s= n\big|{1\over 2}-{1\over p}\big|
 \end{eqnarray}
uniformly for $t\in{\mathbb R}$ and for $\phi$ in   bounded subsets of $C_0^{\infty}(\mathbb R).$
\end{prop}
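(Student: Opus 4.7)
The plan is to exploit the elementary factorization
$$e^{itL}\phi(2^{-k}L) = e^{-(2^{-k}-it)L}\,\phi_e(2^{-k}L), \qquad \phi_e(\lambda):=e^{\lambda}\phi(\lambda),$$
which is an identity on $L^2(X)$ by the spectral theorem, and then to bound the two factors separately on $L^p$.

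For the complex-time semigroup factor I would invoke the Blunck bound \eqref{complex} with $z = 2^{-k}-it$. Since $\Re\, z = 2^{-k}$ and $|z| = \sqrt{2^{-2k}+t^2}$, one has $|z|/\Re\, z = \sqrt{1 + 2^{2k}t^2} \leq 1 + 2^k|t|$, so \eqref{complex} yields
$$\|e^{-(2^{-k}-it)L}\|_{p\to p} \leq C(1+2^k|t|)^{s}, \qquad s = n\big|\tfrac12-\tfrac1p\big|,$$
for every $p \in [p_0, p_0']$ with $p\neq\infty$. For the spectral multiplier factor, whenever $\phi$ ranges over a bounded subset of $C_0^{\infty}(\mathbb{R})$ the function $\phi_e=e^{\lambda}\phi(\lambda)$ also lies in a bounded subset of $C_0^{\infty}(\mathbb{R})$; in particular $\sup_{R>0}\|\eta\delta_R\phi_e\|_{C^\beta}$ is uniformly bounded. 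Applying Proposition~\ref{prop2.5}(a) to $\phi_e(2^{-k}L)$ (after a standard even extension of $\phi_e$, which is innocuous because only the values on the spectrum $[0,\infty)$ of $L$ are relevant) gives $\|\phi_e(2^{-k}L)\|_{p\to p}\le C$ uniformly in $k\ge 0$ and in $\phi$ in a bounded subset of $C_0^{\infty}(\mathbb{R})$. Composing the two bounds produces \eqref{e11.3}.

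The only substantive input here is the sharp complex-time estimate \eqref{complex} with $\varepsilon=0$: absent this refinement, one would only obtain \eqref{e11.3} with $s+\varepsilon$ in place of $s$, and correspondingly only the weaker conclusion \eqref{e1.4}. Once \eqref{complex} is granted, the argument reduces to a one-line factorization together with a uniform spectral multiplier estimate, so there is no real technical obstacle — this proposition is essentially a bookkeeping step that packages \eqref{complex} and Proposition~\ref{prop2.5}(a) into the precise form needed for the proof of Theorem~\ref{th1.1}.
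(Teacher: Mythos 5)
Your proof is correct and follows exactly the route in the paper: factor $e^{itL}\phi(2^{-k}L)=e^{-(2^{-k}-it)L}\phi_e(2^{-k}L)$, apply Blunck's sharp complex-time bound \eqref{complex} with $z=2^{-k}-it$ to the first factor, and apply Proposition~\ref{prop2.5}(a) to $\phi_e(2^{-k}L)$ to control the second factor uniformly in $k$. Your remarks on the explicit computation $|z|/\Re z=\sqrt{1+2^{2k}t^2}\le 1+2^k|t|$ and on uniformity over bounded subsets of $C_0^\infty$ are accurate and flesh out details the paper leaves implicit.
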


\begin{proof} To show \eqref{e11.3}, we apply   \eqref{complex} with $z=2^{-k}-it$
to get  that for every $\phi\in C_0^{\infty}({\mathbb R})$,
\begin{eqnarray*}
\|e^{itL}\phi(2^{-k}L)\|_{p\to p}&=&  \left\|e^{-(2^{-k}-it)L} \big[  \phi_e(2^{-k}L)\big]\right\|_{p\to p}
\leq  C   (1+2^k |t|)^{s } \| \phi_e(2^{-k}L)\|_{p\to p} \nonumber\\
 &\leq&  C   (1+2^k |t|)^{s },
\end{eqnarray*}
where $\phi_e(\lambda)=e^{\lambda} \phi(\lambda)$. In the last inequality we used
Proposition~\ref{prop2.5} to know that the operator $\phi_e(2^{-k}L)$ is bounded on $L^p(X)$
all $p\in (p_0, p'_0).$
This completes the proof of Proposition~\ref{prop1.1}.
\end{proof}

  \medskip

To prove Theorem~\ref{th1.1},  let us
  introduce some tools needed in the proof. Let  $T$ be  a sublinear operator which is bounded on $L^{2}(X)$
and  $\{A_r\}_{r>0}$ be a family of linear  operators acting on  $L^{2}(X)$.    For $f\in L^2(X)$,
we follow  \cite{ACDH} to  define
$$
{\mathfrak  M}^{\#}_{T, A}f(x)= \sup_{B\ni x} \left( \fint_{B} \big|T(I-A_{r_B}) f\big|^2 d\mu\right)^{1/2},
$$
where the supremum is taken over all balls $B$ in $X$ containing $x$, and $r_B$ is the radius of $B.$
 Then we have the following result. For its  proof, we refer readers  to  \cite[Lemma 2.3]{ACDH}, \cite[Lemma 5.4]{DY}
 and \cite[Proposition 3.2]{SYY}.

\smallskip

\begin{prop}\label{prop3.2}
Suppose that   $T$ is  a sublinear operator which is bounded on $L^{2}(X)$ and that $q\in (2, \infty].$
Assume that $\{A_r\}_{r>0}$ is  a family of linear  operators acting on  $L^{2}(X)$ and that
\begin{eqnarray}\label{e3.1}
\left(  \fint_B|T A_{r_B}f(y)|^{q} d\mu(y)\right)^{1/q}
\leq C  {\mathfrak  M}_2 \big( Tf  \big) (x)
\end{eqnarray}

\noindent
for all $f \in L^{2}(X) $, all $x\in X$ and all balls $B\ni x$, $r_B$ being
the radius  of $B$.

Then for $0<p<q$, there exists $C_p$ such that
\begin{eqnarray}\label{e3.2}
 \left\|  {\mathfrak  M}_2\big(   Tf  \big) \right\|_p\leq C_p \left(  \left\|{\mathfrak  M}^{\#}_{T, A}f \right\|_p +\|f\|_p\right)
\end{eqnarray}
 for every $f\in L^2(X)$ for which the left-hand side is finite (if $\mu(X)=\infty$, the term $C_p\|f\|_p$
 can be omitted in the right-hand side of \eqref{e3.2}).
\end{prop}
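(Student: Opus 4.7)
The plan is to establish the required inequality via a good-$\lambda$ argument in the spirit of Fefferman--Stein, adapted to the operator-theoretic setting of Auscher--Coulhon--Duong--Hofmann (cf.\ \cite{ACDH, DY, SYY}). The local ingredient is the triangle-inequality splitting, valid for every ball $B\ni x$,
\begin{equation*}
\left(\fint_B |Tf|^2 d\mu\right)^{1/2} \leq \left(\fint_B |T(I-A_{r_B})f|^2 d\mu\right)^{1/2} + \left(\fint_B |TA_{r_B}f|^2 d\mu\right)^{1/2}.
\end{equation*}
The first right-hand term is bounded by $\mathfrak{M}^{\#}_{T,A}f(x)$ directly from the definition. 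For the second, since $q>2$, H\"older followed by hypothesis \eqref{e3.1} yields
\begin{equation*}
\left(\fint_B |TA_{r_B}f|^2 d\mu\right)^{1/2}\leq \left(\fint_B |TA_{r_B}f|^q d\mu\right)^{1/q}\leq C\,\mathfrak{M}_2(Tf)(x).
\end{equation*}

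To upgrade this local splitting to an $L^p$ bound, I would prove a good-$\lambda$ inequality of the form
\begin{equation*}
\mu\{\mathfrak{M}_2(Tf) > K\lambda,\ \mathfrak{M}^{\#}_{T,A}f\leq \gamma\lambda\}\leq \epsilon(\gamma)\,\mu\{\mathfrak{M}_2(Tf) > \lambda\},
\end{equation*}
for some fixed $K>1$ and $\epsilon(\gamma)\to 0$ as $\gamma\to 0$. To set this up I would cover the open set $E_\lambda = \{\mathfrak{M}_2(Tf) > \lambda\}$ by a Whitney-type family $\{B_i\}$ of bounded overlap---most conveniently via Christ's dyadic system on the space of homogeneous type---with the property that a controlled dilate $B_i^* = cB_i$ meets $E_\lambda^c$, so that there is a reference point $\bar x_i\in B_i^*$ with $\mathfrak{M}_2(Tf)(\bar x_i)\leq \lambda$. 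Localising on $B_i$ and applying the splitting at the scale of $B_i^*$, the $T(I-A_{r_{B_i^*}})f$ piece is pointwise bounded by $\mathfrak{M}^{\#}_{T,A}f$, hence by $\gamma\lambda$ on the good set, while $TA_{r_{B_i^*}}f$ belongs to $L^q(B_i^*, d\mu/\mu(B_i^*))$ with norm $\lesssim\lambda$ by \eqref{e3.1} applied at $\bar x_i$. A Chebyshev/Kolmogorov estimate in $L^q$, which is precisely where the strict inequality $q>2$ is exploited, bounds the exceptional set inside $B_i$ by $\epsilon\mu(B_i)$, and summing over $i$ yields the good-$\lambda$ estimate.

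The conclusion then follows from the layer-cake identity
\begin{equation*}
\|\mathfrak{M}_2(Tf)\|_p^p = p\int_0^\infty \lambda^{p-1}\mu\{\mathfrak{M}_2(Tf) > \lambda\}\,d\lambda,
\end{equation*}
after substituting the good-$\lambda$ inequality and absorbing a fraction of $\|\mathfrak{M}_2(Tf)\|_p^p$ to the left-hand side. The absorption is legitimate exactly because of the \emph{a priori} finiteness assumption on $\|\mathfrak{M}_2(Tf)\|_p$ in the statement; the extra $\|f\|_p$ term on the right arises from a truncation of the maximal set that is only needed when $\mu(X)<\infty$. I expect the main technical obstacle to be carrying out the Whitney decomposition in the abstract homogeneous-space setting so that the averaging radii of \eqref{e3.1} match the Whitney scales and the covering/dilation constants are controlled by the doubling constant of $\mu$, and then the subsequent Kolmogorov step where the gain from $q>2$ must be tracked quantitatively to ensure $\epsilon(\gamma)$ can be made small enough for the absorption to go through.
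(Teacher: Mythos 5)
The paper offers no proof of Proposition~\ref{prop3.2} of its own; it simply defers to \cite[Lemma 2.3]{ACDH}, \cite[Lemma 5.4]{DY}, and \cite[Proposition 3.2]{SYY}. Your good-$\lambda$ argument---Whitney-type (Christ dyadic) covering of $\{\mathfrak{M}_2(Tf)>\lambda\}$, the $(I-A_r)/A_r$ splitting at the Whitney scale, Kolmogorov/Chebyshev using the gain $q>2$, layer-cake integration with absorption justified by the a priori finiteness, and the $\|f\|_p$ term arising only when $\mu(X)<\infty$---is exactly the proof given in those references, so your proposal is correct and follows the same route the paper relies on.
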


 \bigskip

 \noindent
{\em Proof of}   { Theorem~\ref{th1.1}.} \  Let us show Theorem~\ref{th1.1} for $2<p<p'_0$
and $s=n|{1/2}-{1/ p}|$.
 We fix a non-zero $C^{\infty}$ bump function  $\varphi$ on $\mathbb R$ such that
\begin{eqnarray}\label{e3.3}
{\rm supp} \, \varphi \subseteq ({1\over 2}, 2) \ \ {\rm and} \ \
\sum_{\ell\in {\mathbb Z}}\varphi(2^{-\ell}\lambda)=1 \ \ \ {\rm for\ all}\  \lambda>0
\end{eqnarray}
and set $\varphi_{0}(\lambda)=\sum_{\ell\leq 0}\varphi(\lambda/2^{\ell})$ and
$\varphi_{\ell}(\lambda)=\varphi(\lambda/2^{\ell})$ for $\ell=1, 2, \ldots$.

For this fixed bump function $\varphi$,  we consider an operator $T_\varphi$, given by
\begin{align}\label{Tphi}
T_\varphi f(x)= \left(\sum_{k\geq  0 } |\varphi_k({L})e^{itL}f(x)|^2\right)^{1/2}
\end{align}
for every $f\in L^2(X)$. Then from \eqref{xx}, it is direct to see that $\|e^{itL}f\|_p  \leq   C\| T_\varphi f \|_p$
for $2<p<p_0'$.

Next, we define a sharp maximal function ${\mathfrak  M}^{\#}_{T_\varphi, L, K}$ of $T_\varphi$ as follows: for every $K\in{\mathbb N}$ and every $f\in L^2(X)$,
\begin{eqnarray}\label{e3.6}
{\mathfrak  M}^{\#}_{T_\varphi, L, K}f(x)&=& \sup_{B\ni x} \left( \fint_{B} \big|T_\varphi(I-e^{-r_B^mL})^K f(y)\big|^2 d\mu(y)\right)^{1/2} ,
\end{eqnarray}
where the supremum is taken over all balls $B$ in $X$ containing $x$, and $r_B$ is the radius of $B.$
In order to prove Theorem~\ref{th1.1}, it suffices to show the following two arguments:

\begin{itemize}

\item[($\mathfrak{a}_1$)] the operator $T_\varphi$ satisfies condition \eqref{e3.1}  for every  $2<p<q<p'_0$
and $A_{r_B}=I-(I-e^{-r_B^mL})^K$ for every  $K\in{\mathbb N}$;

\item[($\mathfrak{a}_2$)] by choosing $K$ large enough,  for $s=n|1/2-1/p|,$
  \begin{eqnarray}\label{e3.7}
 \left\| {\mathfrak  M}^{\#}_{T_\varphi, L, K}f\right\|_{p} \leq C(1+|t|)^{s}
 \left(\sum\limits_{k\geq  0}  2^{ksp} \|\varphi_k({L})  f\|_p^p\right)^{1/p}.
\end{eqnarray}

\end{itemize}

Before we prove the above two arguments ($\mathfrak{a}_1$) and ($\mathfrak{a}_2$), let us show that Theorem~\ref{th1.1} is a straightforward consequence of
them. Indeed, when ($\mathfrak{a}_1$) holds for $T_\varphi$, it follows
 from  (b) of Proposition~\ref{prop2.5} and Proposition~\ref{prop3.2}  that for $2<p<p_0'$,
 $\|  {\mathfrak  M}_2\big(  T_\varphi f  \big)  \|_p
\leq  C_p    \big( \|f\|_p + \|{\mathfrak  M}^{\#}_{T_\varphi, L, K}f  \|_p \big)$. This, together with \eqref{e3.7}, yields that
\begin{eqnarray}\label{mmm}
\|e^{itL}f\|_p  \leq   C\| T_\varphi f \|_p
 &\leq&   C \|  {\mathfrak  M}_2\big(  T_\varphi f  \big)  \|_p\leq  C_p    \big( \|f\|_p + \|{\mathfrak  M}^{\#}_{T_\varphi, L, K}f  \|_p \big)\nonumber\\
&\leq& C\|f\|_p + C(1+|t|)^{s} \left(\sum\limits_{k\geq  0} 2^{ks p} \|\varphi_k({L})f\|_p^p\right)^{1/p}\nonumber\\
 &\leq&  C\|f\|_p + C(1+|t|)^{s}\left( \|\varphi_0(f)\|_p
 +  \left\|\left(\sum_{k> 0}   2^{2ks  }\left|\varphi_k({L})f\right|^2\right)^{1/2}\right\|_p\right)\nonumber\\
 &\leq&  C(1+|t|)^{s}  \left(\|f\|_p + \left\|\left(\sum_{k> 0}  \left| \phi_k({L}) \big[L^s f\big]\right|^2\right)^{1/2}\right\|_p\right)\nonumber\\
 &\leq&  C(1+|t|)^{s}  \left(\|f\|_p   +\|L^s f \|_p\right),
\end{eqnarray}
 where
 in the fifth inequality we have used the embedding $\ell^2\hookrightarrow \ell^p$ for $p\geq 2$,
  in the sixth inequality the function $\phi_k(\lambda)=\varphi(2^{-k}\lambda) (2^{-k}\lambda)^{-s}$, and in the last inequality
 we used (b) of Proposition~\ref{prop2.5} for the Littlewood-Paley result for functions in $L^p(X).$
This proves Theorem~\ref{th1.1}.

  \medskip


We now first prove the argument ($\mathfrak{a}_1$).
Indeed, in virtue  of
the formula
\begin{eqnarray}\label{e3.4}
 I-(I-e^{-r_B^mL})^K =\sum_{\tau=1}^K
 \left(
 \begin{array}{lcr}
K\\
 \tau
 \end{array}
 \right)(-1)^{\tau+1} e^{-\tau r_B^mL}
\end{eqnarray}
and  the commutativity property
 $\varphi_k ({L})e^{itL} e^{-\tau r_B^mL} =e^{-\tau r_B^mL} \varphi_k ({L})e^{itL}$,  it is enough to show that
for all ball $B$ containing $x,$
 \begin{eqnarray} \label{e3.5} \hspace{1cm}
\left(  \fint_B \left(\sum_{k\geq 0}
\big| e^{-\tau r_B^mL} \varphi_k({L}) e^{itL}f(y)\big|^2\right)^{q/2} d\mu(y)\right)^{1/q}
\leq C  {\mathfrak  M}_2 \big(  T_\varphi f  \big)(x).
\end{eqnarray}

Let us prove \eqref{e3.5}. From hypothesis  \eqref{GGE},  it is seen that  condition ${\rm (GGE_{2,q, m})}$ holds
for $2<p<q<p'_0$, i.e,
there exist  constants $C, c>0$  such that for every $u>0$ and $x, y\in X$,
\begin{equation}\label{pp}
\big\|P_{B(x, u^{1/m})} e^{-uL} P_{B(y, u^{1/m})}\big\|_{2\to {q}}\leq
C V(x,u^{1/m})^{-({\frac{1}{ 2}}-{1\over q})} \exp\left(-c\left({d(x,y)^m \over    u}\right)^{1\over m-1}\right).
\end{equation}
By
  Minkowski's inequality, \eqref{pp} and (ii) of Lemma~\ref{le2.1},
  conditions \eqref{e2.2} and \eqref{e2.3} for every
  $\tau =1,2, \ldots, K$ and every ball $B$ containing $x,$
  the left hand side of    \eqref{e3.5} is less than
\begin{eqnarray*}
&&\hspace{-1cm}V(B)^{-1/q}  \sum_{j=0}^{\infty}   \left(\sum_{k\geq 0} \big(
\|P_{B}e^{-\tau r_B^mL} P_{A(x_B, r_B, j)}\varphi_k({L})e^{itL}f \|_{q} \big)^{2}\right)^{1/2}\nonumber \\
&\leq&V(B)^{-1/q} \sum_{j=0}^{\infty}   \|P_{B}e^{-\tau r_B^mL} P_{A(x_B, r_B, j)}\|_{2\to q}
\left( \sum_{k\geq 0}
 \| \varphi_k({L})e^{itL}f \|_{L^2(A(x_B, r_B, j))}^{2}\right)^{1/2}\nonumber \\
&\leq&  C \sum_{j=0}^{\infty} \left({V((j+1)B) \over  V(B)}\right)^{1/2} e^{-c_\tau  j^{m/(m-1)}} (1+j)^n
\left(   \fint_{(j+1)B}
\sum_{k\geq 0} \big| \varphi_k({L})e^{itL}f(y)\big|^2 d\mu(y) \right)^{1/2}\nonumber \\
&\leq& C  \sum_{j=0}^{\infty} e^{-c_\tau j^{m/(m-1)}} (1+j)^{3n/2} {\mathfrak  M}_2 \big(  T_\varphi f  \big) (x) \nonumber \\
 &\leq& C  {\mathfrak  M}_2 \big(   T_\varphi f  \big) (x). \nonumber
\end{eqnarray*}
The above estimate  yields   \eqref{e3.5}.

Thus, we obtain that the argument ($\mathfrak{a}_1$) holds.

We now show the argument ($\mathfrak{a}_2$). In the sequel  we let ${\phi}\in C_0^{\infty}({\mathbb R})$ supported in $(1/4, 4)$ and ${\phi}(x)=1$ if $x\in (1/2, 2)$,
 and set $\phi_k(x)=\phi(2^{-k}x)$ for $k\geq1$. Let $\phi_0\in C_0^{\infty}([-4, 4])$ and ${\phi}_0(x)=1$ if $x\in (-2,  2)$.
By spectral theory,  we have that
$\varphi_k({L})  f ={\phi}_k( {L})\varphi_k({L}) f $ for $k\geq0$ and for every $f\in L^2(X).$
Hence,   the proof of \eqref{e3.7}  reduces to show that
 \begin{eqnarray}\label{e3.8}
 \|I\|_{p}+ \|II\|_{p}+  \|III\|_{p}  \leq  C(1+|t|)^{s}
 \left(\sum\limits_{k\geq  0} \|\varphi_k({L})  f \|_p^p\right)^{1/p},
\end{eqnarray}
where
 \begin{eqnarray*}
I(x)&=& \sup_{B\ni x}  \left(\fint_B  \sum_{0\leq k\leq -j}   2^{-2ks}   \left|  (I-e^{-r_B^mL})^K \phi_k({L})
 \big[e^{itL} \phi_k({L}) \varphi_k({L})  f\big]   (y) \right|^2 d\mu(y)\right)^{1/2},\nonumber\\
II(x) &=&   \sup_{B\ni x}   \left(  \fint_B  \sum_{\substack{ k+j>0\\  j\geq (m-1)k +m{\rm log}_2 (2+2|t|) \\ k\geq 0}}   2^{-2ks}   \left|  (I-e^{-r_B^mL})^K
 e^{itL} \phi_k({L}) \big[\varphi_k({L})  f\big]  (y) \right|^2 d\mu(y)\right)^{1/2}, \nonumber\\
III(x) &=&    \sup_{B\ni x}     \left(\fint_B  \sum_{\substack{ k+j>0\\ j< (m-1)k +m{\rm log}_2 (2+2|t|) \\  k\geq  0}}
  2^{-2ks}   \left|  (I-e^{-r_B^mL})^K
 e^{itL}\phi_k({L}) \big[\varphi_k({L})  f\big]   (y) \right|^2 d\mu(y)\right)^{1/2}.
\end{eqnarray*}
Here,  we use the notation in  the above decomposition that the ball $B$   is centered at $x_B$ and its radius $r_B$
 is in $[2^{(j-1)/m}, 2^{j/m})$ for some $j\in{\mathbb Z}$.

\medskip

\noindent
{\underline{Estimate  of the term $I(x)$}.} \
By the Minkowski inequality, we see that
\begin{align*}
I(x)&\leq \sup_{B\ni x } \bigg( \fint_B \Big|(I-e^{-r_B^mL})^K e^{itL} \phi_0(L)[\varphi_0(L)f](y)\Big|^2d\mu(y) \bigg)^{  1\over2}
\\
&\quad+\sup_{B\ni x }   \mu(B)^{-1/2}   \sum_{u=0}^{\infty} \sum_{1\leq  k\leq -j}  2^{- ks}
\left\|P_{B}  (I- e^{-r^m_BL})^K  {\phi}_k({L})  P_{A(x_B, r_{B }, u)}  [e^{itL}  {\phi}_k( {L})
 {\varphi}_k({L})f] \right\|_2\\
 &=I_1(x)+I_2(x).
\end{align*}

For the term $I_1(x)$, from the arguments in \eqref{e3.4} and \eqref{e3.5},  it is direct to see that for every $x\in B$,
$ I_1(x)\leq C{\mathfrak  M}_2\big(  e^{itL} \phi_0(L)[\varphi_0(L)f]\big)(x). $
Then from Proposition \ref{prop1.1},
$$ \|I_1\|_p\leq  C\big\|  e^{itL} \phi_0(L)[\varphi_0(L)f]\big\|_p \leq C(1+|t|)^s\|\varphi_0(L)(f)\|_p.$$

For the term $I_2(x)$,
since the function
$ (1- e^{-r^m_B\lambda})^K \phi_k(\lambda) $ is supported in $  [2^{k-2}, \ 2^{k+2}]$, $k\geq1$,
it tells us that for $u=0, 1, $
\begin{eqnarray*}
 \big\|P_{B }  (I- e^{-r_B^m L})^K   \phi_k ({L}) P_{A(x_B, r_{B }, u)} \big\|_{2\to 2}
 &\leq&   \big\| (I- e^{-r_B^m L})^K   \phi_k ({L})   \big\|_{2\to 2} \nonumber\\
 &\leq&  C    \|   (1- e^{-r_B^m\lambda})^K
 \phi_k(\lambda)  \|_{L^{\infty}} \nonumber\\
 &\leq&   C\min\{ 1, (2^kr_B^m)^{K} \},
\end{eqnarray*}
also for $u\geq 2$,  we  use Lemma~\ref{le2.2} to obtain that for every $M>0,$
\begin{eqnarray} \label{e3.9}
 \big\|P_{B }  (I- e^{-r_B^m L})^K    \phi_k ({L}) P_{A(x_B, r_{B }, u)}\big\|_{2\to 2}
 &\leq&    C u^{-M}\big(2^{k/m} r_{B }\big)^{-M-n} \|\delta_{2^{k+2}}\big( (1- e^{-r_B^m\lambda})^K
 \phi_k(\lambda) \big) \|_{  W^{M+n+1}_2}\nonumber\\
 &\leq&   Cu^{-M} 2^{-(k+j)(M+n)/m} \|  (1- e^{-2^{ (k+2)}r_B^m\lambda})^K  \phi(4\lambda)  \|_{  W^{M+n+1}_2}
 \nonumber\\
 &\leq&    C u^{-M} \min\{2^{-(k+j)(M+n)/m}, 2^{(k+j)(K-M/m-n/m)} \}.
\end{eqnarray}
Those, in combination with $k+j\leq 0$ and the fact that for all $u\geq 0$ and $g\in L_{loc}^2(X)$
\begin{eqnarray} \label{e2.555}
 \|P_{A(x_B, r_{B }, u)}g  \|_2 &\leq&  \mu((u+1)B)^{1/2} \left(\fint_{(u+1)B }|  g(y)|^2 d\mu(y)\right)^{1/2}
 \nonumber\\
 &\leq& C (1+u)^{n/2} \mu(B)^{1/2}{\mathfrak  M}_2\left(  g \right)(x),
\end{eqnarray}
yield
\begin{eqnarray*}
I_2(x)
   &\leq&      \sup_{B\ni x}    \sum_{1\leq  k\leq -j}  \sum_{u=0}^{\infty} 2^{- ks}   (1+u)^{-(M-n/2)} 2^{(k+j)(K-(M+n)/m)}
   {\mathfrak  M}_2\Big(  e^{itL}  {\phi}_k( {L})
 {\varphi}_k({L})f \Big)  (x)\\
     &\leq&
    C
    \sup_{B\ni x}
	 \sum_{1\leq  k\leq -j}  2^{- ks}    2^{(k+j)(K-(M+n)/m)}
	 {\mathfrak  M}_2\Big(  e^{itL}  {\phi}_k( {L})
 {\varphi}_k({L})f \Big)  (x),
\end{eqnarray*}
  where  $M>n/2$ and $K$ is large enough so that $K> (M+n)/m.$
We then use the embedding $\ell^p\hookrightarrow \ell^{\infty}$, the Minkowski inequality, $L^{p/2}$-boundedness of ${\mathfrak  M}$
and Proposition~\ref{prop1.1} to see that
 \begin{eqnarray*}
  \|I_2\|_{p}
   &\leq&   C \left\| \left(\sum_{j=-\infty}^{\infty} \left( \sum_{1\leq k\leq -j}  2^{(k+j)(K-(M+n)/m)}  2^{- ks}
 {\mathfrak  M}_2\left(  e^{itL}  {\phi}_k({L})
 {\varphi}_k({L})f \right)  (x)\right)^p\right)^{1/p}\right\|_p\\
   &\leq&  C \sum_{\ell\geq 0} 2^{-\ell(K-(M+n)/m)}\left(\sum_{j< -\ell}     2^{ (\ell +j)sp}
   \left\| {\mathfrak  M}_2\left( e^{itL}  {\phi}_{-(\ell+j)}({L})
 {\varphi}_{-(\ell+j)}({L})f  \right) \right\|_p^p
    \right)^{1/p} \\
	&\leq&  C \sum_{\ell\geq 0} 2^{-\ell(K-(M+n)/m)}\left(\sum_{j< -\ell}     2^{(\ell +j)sp}
   \left\|  e^{itL}\phi_{-(\ell+j)}({L})\big[\varphi_{-(\ell+j)}({L})f\big]  \right\|_p^p
    \right)^{1/p} \\
   &\leq& C(1+|t|)^s\sum_{\ell\geq 0} 2^{-\ell(K-(M+n)/m)}\left(  \sum_{j<-\ell}
  \left\|   \varphi_{-(\ell+j)} ({L}) f    \right\|_{p}^p\right)^{1/p} \nonumber\\
 &\leq&   C(1+|t|)^s\left(  \sum_{k\geq  1}
  \left\|   \varphi_k({L})f \right\|_{p}^p\right)^{1/p}
\end{eqnarray*}
 as desired,  as long as $K$ is chosen large enough so that $K> (M+n)/m$. Combining the estimates of $I_1$ and $I_2$ we get that
 $$\|I\|_{p}\leq  C(1+|t|)^{s}
 \left(\sum\limits_{k\geq  0} \|\varphi_k({L})  f \|_p^p\right)^{1/p}.
$$

  \medskip

\noindent
{\underline{Estimate of  the term $II(x)$.}} \ \  Note that
\begin{eqnarray*}
 II(x)&\leq & \sup_{B\ni x } \bigg( \fint_B \Big|(I-e^{-r_B^mL})^K e^{itL} \phi_0(L)[\varphi_0(L)f](y)\Big|^2d\mu(y) \bigg)^{  1\over2}
\\
&&+\sup_{B\ni x}      \sum_{\substack{ k+j>0\\  j\geq (m-1)k +m{\rm log}_2 (2+2|t|) \\ k\geq  1}}
 \sum_{\ell=0}^{\infty} 2^{- ks}  \mu(B)^{-1/2}  \nonumber\\
&&\times \left\|P_B (I-e^{-r_B^mL})^K  e^{itL} \phi_k({L})P_{A(x_B, r_B, \ell)}\right\|_{2\to 2}
  \left\|P_{A(x_B, r_B, \ell)}
   [\varphi_k({L})f   ] \right\|_2\\
  &=& II_1(x)+II_2(x).
\end{eqnarray*}

Similar to the estimate of $I_1(x)$ above, we see that
$ \|II_1\|_p \leq C(1+|t|)^s\|\varphi_0(L)(f)\|_p.$

We now estimate $II_2(x)$. For a fixed $r_B>0$, we   choose
 a sequence of points  $\{x_i\}_{i}  \subset X$ such that
$d(x_{i}, x_k)> r_B$ for $ {i}\neq  k$ and $\sup_{x\in X}\inf_{i} d(x, x_{i})
\le r_B $. Such sequence exists because $X$ is separable. Set
$$
J_{\ell} =\big\{B(x_{i}, r_B):  B(x_{i}, r_B)\cap A(x_B, r_B, \ell) \not=\emptyset \big\},\quad \ell\geq0.
$$
 It follows from \eqref{e200} that for every $B(x_{i}, r_B)\in J_{\ell},$
 $$
 V(x_B, r_B)\leq \left(1+ {d(x_i, x_B)\over r_B}\right)^D V(x_i, r_B)\leq C(1+\ell)^D V(x_i, r_B)
 $$
 and  so
\begin{eqnarray} \label{qq}
 \# J_{\ell}
 \le
  C (1+\ell)^D \times  {V(x_B,  (\ell+1)r_B )  \over   V(x_B, r_B)}\leq C (1+\ell)^{D+n}< \infty.
\end{eqnarray}
Then we have
\begin{eqnarray*}
 II_2(x)&\leq &\sup_{B\ni x}      \sum_{\substack{ k+j>0\\  j\geq (m-1)k +m{\rm log}_2 (2+2|t|) \\ k\geq  1}}
 \sum_{\ell=0}^{\infty} \sum_{B(x_{i}, \, r_B)\in J_\ell}2^{- ks}  \mu(B)^{-1/2}  \nonumber\\
&&\times \left\|P_B (I-e^{-r_B^mL})^K  e^{itL} \phi_k({L})P_{B(x_i,\, r_B)}\right\|_{2\to 2}
  \left\|P_{A(x_B, r_B, \ell)}
   [\varphi_k({L})f   ] \right\|_2.
\end{eqnarray*}
In this case, since $j\geq (m-1)k +m{\rm log}_2 (2+2|t|)$ and so $r_B\geq c2^{(m-1)k/m} (1+|t|)$ with $c=2^{(m-1)/m}\geq 1/4$, we apply
   Proposition~\ref{prop3.3} to see that for every $B(x_{i}, \, r_B)\in J_\ell $ with $\ell\geq 7, 8, \cdots, $
  \begin{eqnarray} \label{kk}\hspace{1cm}
   \left\|P_B (I-e^{-r_B^mL})^K  e^{itL} \phi_k({L})P_{B(x_i,\, r_B)}\right\|_{2\to 2}\leq C
   \left(1+{d(B, \, B(x_i,\, r_B))\over 2^{(m-1)k/m} (1+|t|)}\right)^{-M}\leq C\big( 1+\ell\big)^{-M}
\end{eqnarray}
   for every $M>0$. For $\ell=0, 1, \cdots, 6$, it follows from  $L^2$-boundedness of $(I-e^{-r_B^mL})^K  e^{itL} \phi_k({L})$ that
$
   \left\|P_B (I-e^{-r_B^mL})^K  e^{itL} \phi_k({L})P_{B(x_i,\, r_B)}\right\|_{2\to 2}\leq C.
$
   These, in combination with the fact that for every $x\in B$,
\begin{eqnarray*}
 \|P_{A(x_B, r_{B }, \ell )}[\varphi_k({L})f   ]  \|_2 &\leq&  \mu((\ell +1)B)^{1/2} \left(\fint_{(\ell +1)B }|  \varphi_k({L})f  (y)|^2 d\mu(y)\right)^{1/2}
 \nonumber\\
 &\leq& C (\ell +1)^{n/2} \mu(B)^{1/2}{\mathfrak  M}_2\left(  \varphi_k({L})f   \right)(x),
\end{eqnarray*}
imply
 \begin{eqnarray*}
II_2(x) &\leq&      C  \sum_{  k\geq 1}
 \sum_{\ell=0}^{\infty}  2^{- ks}    \big( 1+\ell\big)^{-(M-D-3n/2)}
  {\mathfrak  M}_2\left(\varphi_k({L}) f \right)(x) \\
  &\leq& C\sum\limits_{k\geq  1}   2^{-ks}
  {\mathfrak  M}_2\left(\varphi_k({L}) f \right)(x)
\end{eqnarray*}
  as long as $M$ in \eqref{kk} is chosen large enough so that $M> D+2n$. As a consequence, we have that for $2<p<p'_0,$
 \begin{eqnarray*}
 \|II_2 \|_{p}  \leq   C\left\| \sum_{k\geq 1}  2^{- ks}
	 {\mathfrak  M}_2\left(\varphi_k({L}) f \right)\right\|_{p}
 & \leq &      C \left (\sum_{k\geq  1}
 \left\|  {\mathfrak  M}_2\Big(\varphi_k({L}) f\Big) \right\|_{p}^{p} \right)^{1/{p}}
  \leq     C \left (\sum_{k\geq  1}
 \left\|   \varphi_k({L}) f  \right\|_{p}^{p} \right)^{1/{p}}.
\end{eqnarray*}

Combining the estimates of $II_1$ and $II_2$ we obtain the estimate of $II$
 as desired.

\medskip

\noindent
{\underline{Estimate of  the term $III(x)$.}} \ As to be seen later, the term $III(x)$
is the major one.

Similar to the estimates for $II$ and $I$ above, we write
\begin{align*}
III(x) &\leq \sup_{B\ni x } \bigg( \fint_B \Big|(I-e^{-r_B^mL})^K e^{itL} \phi_0(L)[\varphi_0(L)f](y)\Big|^2d\mu(y) \bigg)^{  1\over2}
\\
&\quad+  \sup_{B\ni x}     \left(\fint_B  \sum_{\substack{ k+j>0\\ j< (m-1)k +m{\rm log}_2 (2+2|t|) \\  k\geq  1}}
  2^{-2ks}   \left|  (I-e^{-r_B^mL})^K
 e^{itL}\phi_k({L}) \big[\varphi_k({L})  f\big]   (y) \right|^2 d\mu(y)\right)^{1/2}\\
&=III_1(x)+III_2(x) .
\end{align*}
Again, it is clear that $ \|III_1\|_p \leq C(1+|t|)^s\|\varphi_0(L)(f)\|_p.$ It suffices to verify $III_2(x)$.

For a given $x\in X$ and a ball $x\in B_j=B(x_{B_j}, r_{B_j}) $ with $r^m_{B_j}\in [2^{j-1}, 2^{j}]$.
We define a family of operators $\{A_{r_{B_j}}\}_{j=1}^{\infty}$ with  non-negative kernels
$\{a_{r_{B_j}}(x,y)\}_{j=1}^{\infty} $  such that
\begin{eqnarray*}
a_{r_{B_j}}(x,y)= {1\over \mu(B(x, 2r_{B_j}))} \chi_{B(x, 2r_{B_j})}(y).
 \end{eqnarray*}
We will use
$$
A_{r_{B_j}}g(x)=\int_X  a_{r_{B_j}}(x,y)  g(y)  d\mu(y)
$$
to replace the mean value $\fint_{B_j}$ in the term $III_2(x)$. It is seen  that for every non-negative function $g\in L^1_{\rm loc}(X)$
and $B_j$ containing $x,$
\begin{eqnarray*}
 \fint_{B_j}  g(y) d\mu(y)
&\leq&    \left({  \mu(B(x_{B_j}, 3r_{B_j}))\over \mu(B_j) }\right)
A_{r_{B_j}}g(x)
 \leq   C
A_{r_{B_j}}g(x)
 \end{eqnarray*}
and so   $ III_2(x) \leq C {\widetilde {III}}_2(x),$
 where
 \begin{eqnarray}\label{e3.15} \hspace{1cm}
 {\widetilde {III}}_2(x):&=& \sup_{j\in{\mathbb Z}}
  \left(\sum_{ \substack{  k+j>0\\  j< (m-1)k +m{\rm log}_2 (2+2|t|) \\ k\geq  1}}     2^{-2ks} A_{r_{B_j}}\left(
 \left | (I-e^{-r_{B_j}^mL})^Ke^{itL} \phi_k({L})  [\varphi_k({L})f]\right|^2 \right)(x)\right)^{1/2}.
\end{eqnarray}

Now for  every  $k\geq 1$, we   choose
 a sequence $(x^{(k)}_{\tau})_\tau  \in X$ such that
$d(x^{(k)}_{\tau}, x^{(k)}_\ell)> 2^{k(m-1)/m}(1+|t|)$ for $ {\tau}\neq  \ell$ and $\sup_{x\in X}\inf_{\tau} d(x,x^{(k)}_{\tau})
\le 2^{k(m-1)/m}(1+|t|)$. Such sequence exists because $X$ is separable.
Let $B_{\tau}^{(k), \ast}=B(x^{(k)}_{\tau}, 8 \cdot 2^{k(m-1)/m}(1+|t|))$ and define $ {B^{(k)}_{\tau}}$ by the formula
$$
{B^{(k)}_{\tau}}=\bar{B}\left(x^{(k)}_{\tau},2^{k(m-1)/m}(1+|t|)\right)\setminus
\bigcup_{\ell<{\tau}}\bar{B}\left(x^{(k)}_\ell, 2^{k(m-1)/m}(1+|t|)\right),$$
where $\bar{B}\left(x^{(k)}_{\tau}, r \right)=\{y\in X \colon d(x^{(k)}_{\tau},y)
\le r\}$.
 We cover $X$ by a grid ${\mathscr R}_k$ consisting of such $\{{B^{(k)}_{\tau}}\}_{\tau}$, that is,
 $X=\bigcup_{ B_{\tau}^{(k)} \in {\mathscr R}_{k}  } B_{\tau}^{(k)}$.
 For every ${B^{(k)}_{\tau}}\in {\mathscr R}_k$, we denote  by $f^{B^{(k)}_{\tau}}=f\chi_{B^{(k)}_{\tau}}.$ Hence,
 one writes
 \begin{eqnarray}\label{e3.16}
 {\widetilde {III}}_2(x)
\leq III_{21}(x)+III_{22}(x),
\end{eqnarray}
where
$$
III_{21}(x)= \sup_{j\in\mathbb Z}  \left(  \sum_{ \substack{  k+j>0\\  j< (m-1)k +m{\rm log}_2 (2+2|t|) \\ k\geq 1}}   2^{-2ks}
 A_{r_{B_j}}\left(\left|\sum_{{B^{(k)}_{\tau}}\in {\mathscr R}_k} \bchi_{B_{\tau}^{(k), \ast}}(I-e^{-r_{B_j}^mL})^Ke^{itL} \phi_k({L})
 [\varphi_k({L})f ]^{B^{(k)}_{\tau}}\right|^2
 \right)(x)\right)^{1/2}  $$
 and
  $
III_{22}(x)$
is the analogous expression where $ \bchi_{B^{(k), \ast}_{\tau}}$ is replaced with  $\bchi_{X\backslash B^{(k), \ast}_{\tau}}.$

Let us first estimate the  term $III_{21}(x)$. Using the embedding $\ell^p\to \ell^{\infty}$, the bounded overlap of $B_{\tau}^{(k), \ast}$ and   Minkowski's
inequality, we obtain that the $L^p$-norm  of the term $III_{21}(x)$ is less than
  \begin{eqnarray*}
C \left( \sum_{j\in\mathbb Z}
 \left\|   \left(\sum_{ \substack{  k+j>0\\  j< (m-1)k +m{\rm log}_2 (2+2|t|)\\ k\geq 1}} 2^{-2ks} A_{r_{B_j}}\left(
  \sum_{{B^{(k)}_{\tau}}\in {\mathscr R}_k} \bchi_{ B_{\tau}^{(k), \ast}} \left  | (I-e^{-r_{B_j}^mL})^Ke^{itL}
 \phi_k({L}) [ \varphi_{k}({L})f]^{B^{(k)}_{\tau}}\right|^2 \right)  \right)^{1/2} \, \right\|^p_{p} \right)^{1/p}.
\end{eqnarray*}

To continue, we claim that the supports of the functions $\{A_{r_{B_j}}\big(\bchi_{B^{(k), \ast}_{\tau}})\}_{\tau}$
have bounded overlap, uniformly in $k.$ Assume this  at the moment. Then by
setting $\ell=k+j>0$,  applying Minkowski's inequality, and the above claim, we obtain
that  $$\|III_{21}\|_{p}\leq \sum_{\ell>0} E_{\ell},
$$
where
\begin{eqnarray*}
E_{\ell}:= \left( \sum_{j<\ell } \sum_{
B^{(\ell-j)}_{\tau}\in {\mathscr R}_{\ell-j}  }
 2^{-(\ell-j)sp} \left\|A_{r_{B_j}}  \bchi_{B_{\tau}^{(\ell-j), \ast}}\left(\left |(I-e^{-r_{B_j}^mL})^Ke^{itL}
 \phi_{\ell-j}({L}) \left[\varphi_{\ell-j}({L})f\right]^{B^{(\ell-j)}_{\tau}} \right |^2
    \right) \right\|^{p/2}_{p/2} \right)^{1/p}.
\end{eqnarray*}

We now show the claim. Note that  for ${B^{(k)}_{\tau}}\in {\mathscr R}_k$, ${B^{(k), \ast}_{\tau}}$ has radius   $8\cdot 2^{k(m-1)/m}(1+|t|)$.
It follows from $r_{B_j}\leq 2^{j/m}\leq 2^{k(m-1)/m-2}(1+|t|)$ that for fixed $k$,
$
A_{r_{B_j}}\big(\bchi_{B^{(k), \ast}_{\tau}})(x) \cdot A_{r_{B_j}}\big(\bchi_{B^{(k), \ast}_{\ell}})(x)=0
$
when $d(x^{(k)}_{\tau}, x^{(k)}_\ell)\geq 20\cdot 2^{k(m-1)/m}(1+|t|).$
From  \eqref{e200}, we  know that
\begin{eqnarray*}
 V(x^{(k)}_\ell, 2^{k(m-1)/m}(1+|t|))&\leq& \left(1+ {d(x^{(k)}_\ell, x^{(k)}_{\tau})\over r_B}\right)^D  V(x^{(k)}_{\tau}, 2^{k(m-1)/m}(1+|t|))\\
 &\leq& C  V(x^{(k)}_{\tau}, 2^{k(m-1)/m}(1+|t|)),
\end{eqnarray*}
which implies
$$
\sup_\tau\#\{\ell:\;d(x^{(k)}_{\tau},x^{(k)}_\ell)\le 30 \cdot 2^{{k(m-1)\over m}}(1+|t|)\} \le
  \sup_x  {V(x,  30 \cdot 2^{{k(m-1)\over m}} (1+|t|)  )\over
  V(x,2^{{k(m-1)\over m}-2}(1+|t|))}\leq C  < \infty.
$$


Next we will show  that
\begin{eqnarray}\label{e3.17}
E_{\ell}\leq C(1+|t|)^s2^{-\ell s/m}
 \left(\sum_{k>  0} \|\varphi_k({L})  f\|_p^p\right)^{1/p}.
\end{eqnarray}
Once \eqref{e3.17} is proven, we see that
\begin{eqnarray}\label{e3.18}
\|III_{21}\|_{p} \leq  C(1+|t|)^s \left(\sum\limits_{k\geq 1}
 \|\varphi_{k}({L})f \|_p^p\right)^{1/p}.
\end{eqnarray}

Let us prove estimate \eqref{e3.17}. First, we observe that for every $g\in L^1(X)$ and ${p/2}>1,$
\begin{eqnarray}\label{e3rr}
\|A_{r_{B_j}}\left(\bchi_{ B_{\tau}^{(\ell-j), \ast}} g\right)\|_{p/2}
&\leq&
 \left(\sup_{y\in B_{\tau}^{(\ell-j), \ast}} \int_{X}
 a^{p/2}_{r_{B_j}}(x,y) \bchi_{ B_{\tau}^{(\ell-j), \ast}}(y) d\mu(x)\right)^{2/p} \|g \|_{1}\nonumber\\
 &\leq& C \sup_{y\in B_{\tau}^{(\ell-j), \ast}} [V(y, r_{B_j})^{-(1-{2\over p})}]
  \|g \|_{1}.
 \end{eqnarray}
From this, we see that the term $E_{\ell}$ is dominated by a constant multiple of
 $$
\left( \sum_{j<\ell }\sum_{
B^{(\ell-j)}_{\tau}\in {\mathscr R}_{\ell-j}   }
 2^{-(\ell-j)sp}  \sup_{y\in B_{\tau}^{(\ell-j), \ast}} [V(y, r_{B_j})^{-(p/2-1)}]  \left\|(I-e^{-r_{B_j}^mL})^Ke^{itL}
 \phi_{\ell  -j}({L}) [ \varphi_{\ell-j}({L})f]^{B^{(\ell-j)}_{\tau}} \right\|_{2}^p   \right)^{1/p}.
 $$
Since  the operator $(I-e^{-r_{B_j}^mL})^Ke^{itL}
 \phi_{\ell-j}({L}) $ is uniformly bounded on $L^2(X)$ and $[\varphi_{\ell-j}({L})f]^{B^{(\ell-j)}_{\tau}}$  is supported on the ball
 $B^{(\ell-j)}_{\tau}$, we see by the H\"older inequality that
the term $E_{\ell}$ is controlled by a constant multiple of
\begin{eqnarray*}
&&\hspace{-1cm}\left( \sum_{j<\ell }    2^{-(\ell-j) s p}
 \sum_{
B^{(\ell-j)}_{\tau}\in {\mathscr R}_{\ell-j}  }
\sup_{y\in B_{\tau}^{(\ell-j), \ast}} \left( \mu(B^{(\ell-j)}_{\tau}) \over  \mu(B(y, r_{B_j}))\right)^{{p\over 2}-{1 }}
\left\| \left[ \varphi_{\ell-j}({L})f\right]^{B^{(\ell-j)}_{\tau}}\right\|^p_p\right)^{1/p}.
\end{eqnarray*}
Note that   for $y\in B_{\tau}^{(\ell-j), \ast}$,
$$
 \left( \mu(B^{(\ell-j)}_{\tau}) \over  \mu(B(y, r_{B_j}))\right)  \leq C (1+|t|)^n 2^{{n\over m}[(\ell-j)(m-1) -j]},
$$
 which yields
\begin{eqnarray*}
E_{\ell}&\leq& C (1+|t|)^{n({1\over 2}-{1\over p})}
\left(\sum_{j<\ell }    2^{-n(\ell-j)({p\over 2}-{1 })} 2^{{n\over m}[(\ell-j)(m-1) -j]({p\over 2}-{1 }) }
    \sum_{
B^{(\ell-j)}_{\tau}\in {\mathscr R}_{\ell-j}  }
\left\| \left[ \varphi_{\ell-j}({L})f\right]^{B^{(\ell-j)}_{\tau}}\right\|^p_p \right)^{1/p}\nonumber\\
 &=& C(1+|t|)^s 2^{-\ell s/m} \left( \sum_{j <\ell}
 \sum_{B^{(\ell-j)}_{\tau}\in {\mathscr R}_{\ell-j}} \left\| \left[ \varphi_{\ell-j}({L})f\right]^{B^{(\ell-j)}_{\tau}} \right\|^p_p\right)^{1/p}.
\end{eqnarray*}
 After summation in $B^{(\ell-j)}_{\tau}\in {\mathscr R}_{\ell-j}$, we  obtain
\begin{eqnarray*}
E_{\ell}
  &\leq&
 C(1+|t|)^s 2^{-\ell s/m} \left( \sum_{j <\ell}
  \left\|   \varphi_{\ell-j}({L})f \right\|^p_p\right)^{1/p}
  \leq
 C(1+|t|)^s 2^{-\ell s/m}
\left(\sum_{k\geq 1} \| \varphi_k({L})  f\|_p^p\right)^{1/p}.
\end{eqnarray*}
 This finishes the proof of \eqref{e3.17} and concludes the desired estimate \eqref{e3.18}  for the term  $III_{21}$.

  Concerning  the term $III_{22}$, we   use the embedding $\ell^p\to \ell^{\infty}$ and the Minkowski inequality to see that
the term $\|III_{22}\|_p$ is controlled by
\begin{eqnarray*}
\left(\sum_{j\in {\mathbb Z}}\left[
   \sum_{ \substack{  k+j>0\\ j< (m-1)k +m{\rm log}_2 (2+2|t|) \\ k\geq  1}}   2^{-2ks}
\left\| A_{r_{B_j}}\left(\left|\sum_{{B^{(k)}_{\tau}}\in {\mathscr R}_k} \bchi_{X\backslash B^{(\ell-j), \ast}_{\tau}}
 (I-e^{-r_{B_j}^mL})^Ke^{itL} \phi_k({L})
 [\varphi_k({L})f ]^{B^{(k)}_{\tau}}\right|^2
 \right) \right\|_{{p/2}} \right]^{p/2}\right)^{1/p}.
 \end{eqnarray*}
The proof of Theorem~\ref{th1.1} will be done if we can show that
  \begin{eqnarray} \label{ecc} \hspace{1cm}
&&\hspace{-3.5cm}\left\|A_{r_{B_j}}  \left(\left | \sum_{
B^{(k)}_{\tau}\in {\mathscr R}_{k}  }\bchi_{X\backslash B^{(k), \ast}_{\tau}}(I-e^{-r_{B_j}^mL})^Ke^{itL}
 \phi_{k}({L}) \left[\varphi_{k}({L})f\right]^{B^{(k)}_{\tau}} \right |^2
    \right)\right\|_{p/2} \nonumber\\
	& \leq & C (1+|t|)^{n(1-{2\over p})} 2^{{n\over m}[k(m-1)-j](1-{2\over p})} \left\|  \varphi_{k}({L})f
 \right\|^{2}_{p}
\end{eqnarray}
since from it, we recall that $s=n|{1/2}-{1/p}|$ to see  that
  \begin{eqnarray} \label{evv}
 \|III_{22} \|_{p}
 &\leq& C   (1+|t|)^{s }\left(\sum_{j\in {\mathbb Z}}\left[
   \sum_{ \substack{  k+j>0\\  j< (m-1)k + m {\rm log}_2 (2+2|t|)\\ k\geq 1}}   2^{-2ks} 2^{{n\over m}[k(m-1)-j](1-{2\over p}) }
\left\|  \varphi_{k}( {L})f
 \right\|^{2}_{p}  \right]^{p/2}\right)^{1/p}\nonumber\\
   &\leq& C (1+|t|)^{s } \sum_{\ell>0}
 \left( \sum_{j<\ell }
  2^{-n(\ell-j)({p\over 2}-1)}   2^{{n\over m}[(\ell-j)(m-1)-j]({p\over 2}-1) }  \left\|    \varphi_{\ell-j}( {L})f
     \right\|_{p}^{p}\right)^{1/p}\nonumber\\	
	   &=& C (1+|t|)^{s } \sum_{\ell>0} 2^{-{\ell s/m}  }
 \left( \sum_{j <\ell}  \left\|    \varphi_{\ell-j}( {L})f      \right\|_{p}^{p}\right)^{1/p}\nonumber\\	
    &\leq& C (1+|t|)^{s }  \left(\sum_{k\geq 1} \|\varphi_{k}( {L})f\|_p^p\right)^{1/p}.
\end{eqnarray}

It remains to prove \eqref{ecc}.
Observe  that $j< (m-1)k +m{\rm log}_2 (2+2|t|)$, and   $r_{B_j}\leq 2^{{(m-1)k/ m} +1}(1+|t|).$ Fix $x\in X$, $k\geq1$
and $j\in {\mathbb Z}$, we consider the  following three cases of $x^{(k)}_{\tau}$:

\medskip

\noindent
{\it Case 1:  $ d( x^{(k)}_{\tau}, x) \leq 6\cdot 2^{(m-1)k/m}(1+|t|)$.}

\smallskip

In this case,   for any $z\in B(x, 2r_{B_j}),$
$$
d(z,   x^{(k)}_{\tau}) \leq  d(z,   x) + d(  x^{(k)}_{\tau}, x)
\leq 8\cdot 2^{(m-1)k/m}(1+|t|) ;
$$
and so $B(x, 2r_{B_j})\cap  \Big(X\backslash B^{(k), \ast}_{\tau}\Big)=\emptyset;$

\medskip

\noindent
{\it Case 2: $ d(  x^{(k)}_{\tau}, x) \geq 10\cdot 2^{(m-1)k/m}(1+|t|)$.}

\smallskip

In this case,   for any $z\in  B(x, 2r_{B_j}) $
$$d(z,  x^{(k)}_{\tau}) \geq  d(  x^{(k)}_{\tau}, x) - d(z,   x)\geq 8\cdot 2^{(m-1)k/m}(1+|t|),
$$
and so  $B(x, 2r_{B_j})\subseteq  X \backslash B^{(k), \ast}_{\tau}$;

\medskip

\noindent
{\it Case 3:  $ 6\cdot 2^{(m-1)k/m}(1+|t|) \leq  d(  x^{(k)}_{\tau}, x ) \leq 10 \cdot 2^{(m-1)k/m}(1+|t|)$.}

\smallskip

In this case, we see that $ d({B^{(k)}_{\tau}}, B(x, 2r_{B_j})) \geq   2^{(m-1)k/m}(1+|t|)$, and
 \begin{eqnarray} \label{ll}\hspace{0.5cm}
 &&\hspace{-2cm}\sharp \left\{ \tau:  6\cdot 2^{(m-1)k/m}(1+|t|) \leq   d(x^{(k)}_{\tau}, x ) \leq 10 \cdot 2^{(m-1)k/m}(1+|t|) \right\} \nonumber\\
&\le&
  \sup_x  {V(x, 10\cdot 2^{(m-1)k/m+1}(1+|t|) )\over
  V(x,2^{ (m-1)k/m-2}(1+|t|))}
  \leq   C   < \infty.
 \end{eqnarray}

From {\it Cases 1, 2}  and   {\it 3}, we see that  there exists a  constant $C>0$ independent of $x $ and $j $  such that
 \begin{eqnarray*}
  A_{r_{B_j}}\left(
 \left|\sum_{{B^{(k)}_{\tau}}\in {\mathscr R}_{k}} \bchi_{X\backslash B^{(k), \ast}_{\tau}}(I-e^{-r_{B_j}^mL})^Ke^{itL}
 \phi_{k}({L})
  \left[ \varphi_{k}({L})f\right]^{B^{(k)}_{\tau}}\right|^2
 \right)(x) \leq D_1(x)+ C   D_2(x),
\end{eqnarray*}
where
 \begin{eqnarray*}
  D_1(x):&= &A_{r_{B_j}}\left(
 \left| (I-e^{-r_{B_j}^mL})^Ke^{itL} \phi_{k}({L})
  \left(\sum_{\tau: \,  d( x^{(k)}_{\tau}, \, x ) \geq 10\cdot 2^{(m-1)k/m}(1+|t|)}
  \left[ \varphi_{k}({L})f\right]^{B^{(k)}_{\tau}}\right)\right|^2
 \right)(x)
\end{eqnarray*}
and
 \begin{eqnarray*}
  D_2(x):= \left(\sum_{\tau: \,  6\cdot 2^{(m-1)k/m}(1+|t|) \leq  d( x^{k}_{\tau}, \, x) \leq 10 \cdot 2^{(m-1)k/m}(1+|t|) } A_{r_{B_j}}\left(
 \left|(I-e^{-r_{B_j}^mL})^Ke^{itL} \phi_{k}({L})
  \left[ \varphi_{k}({L})f\right]^{B^{(k)}_{\tau}}\right)\right|^2
 \right)(x).
\end{eqnarray*}

Let us estimate the term $D_1(x)$ by adapting  an argument as in the term $E_{\ell}$.
First note that
$$X=\bigcup_{ B_{\tau_1}^{(k)} \in {\mathscr R}_{k}  } B_{\tau_1}^{(k)}.$$
Then we write
 \begin{eqnarray*}
  D_1(x)&\leq  &  \sum_{B_{\tau_1}^{(k)}\in {\mathscr R}_{k} }A_{r_{B_j}}\left( P_{B_{\tau_1}^{(k)}}
 \left| (I-e^{-r_{B_j}^mL})^Ke^{itL} \phi_{k}({L})
  \left(\sum_{\tau: \,  d( x^{(k)}_{\tau}, \, x ) \geq 10\cdot 2^{(m-1)k/m}(1+|t|)}
  \left[ \varphi_{k}({L})f\right]^{B^{(k)}_{\tau}}\right)\right|^2
 \right)(x).
\end{eqnarray*}
Applying \eqref{e3rr}, we  see that the $L^{p/2}$-norm of $D_1(x)$
is dominated by a constant times
\begin{eqnarray*}
 \sum_{B_{\tau_1}^{(k)}\in  {\mathscr R}_{k} } \sup_{y\in B_{\tau_1}^{(k)}} [V(y, r_{B_j})]^{-(1-{2\over p})} \left\| P_{B_{\tau_1}^{(k)}}
  (I-e^{-r_{B_j}^mL})^Ke^{itL} \phi_{k}({L})
  \left(\sum_{\tau: \,  d( x^{(k)}_{\tau},\, x_{\tau_1}^{(k)} ) \geq 10\cdot 2^{(m-1)k/m}(1+|t|)}
  \left[ \varphi_{k}({L})f\right]^{B^{(k)}_{\tau}} \right)
 \right\|^2_{2}.
 \end{eqnarray*}
Observe that for every $B_{\tau_1}^{(k)}\in {\mathscr R}_{k},$

\smallskip

 $\bullet$   If $y\in B_{\tau_1}^{(k)},$ then
  \begin{eqnarray*}
 \left({\mu(B_{\tau}^{(k)})\over V(y, r_{B_j})}\right)&=&
 \left({\mu(B_{\tau}^{(k)})\over {\mu(B_{\tau_1}^{(k)})}}\right) \times  \left({\mu(B_{\tau_1}^{(k)})\over V(y, r_{B_j})}\right)
  \leq  C (1+|t|)^n 2^{n[k(m-1) -j]/m} \left(1+{d(B_{\tau_1}^{(k)}, B_{\tau}^{(k)})\over 2^{(m-1)k/m}(1+|t|)}\right )^{D}.
  \end{eqnarray*}

  \smallskip

 $\bullet$  A simple calculation shows that
   \begin{eqnarray*}
 \sharp \left\{ \tau:    2^{{(m-1)k\over m} +u}(1+|t|)\leq  d(B_{\tau_1}^{(k)}, B_{\tau}^{(k)}) \leq 2^{{(m-1)k\over m} +u+1}(1+|t|)
  \right\}
 \le   C 2^{u (D+n)}
 \end{eqnarray*}
 and so
  \begin{eqnarray}\label{pengadd}
 &&\hspace{-1cm}\sum_{\tau: d(B_{\tau_1}^{(k)}, B_{\tau}^{(k)})> 10\cdot 2^{(m-1)k/m}(1+|t|) }
 \left(1+{d(B_{\tau_1}^{(k)}, \, B_{\tau}^{(k)})\over 2^{(m-1)k/m}(1+|t|)}\right )^{-M}\nonumber\\
 &\leq&
 \sum_{u=2}^{\infty}\sum_{\tau:    2^{{(m-1)k\over m} +u}(1+|t|)\leq  d(B_{\tau_1}^{(k)}, B_{\tau}^{(k)})
 \leq 2^{{(m-1)k\over m} +u+1}(1+|t|)} 2^{-uM}
 \leq C \sum_{u=2}^{\infty} 2^{-u(M-(D+n))}\leq C
  \end{eqnarray}
  for $M> D+n$.
Since the function $ \big[ \varphi_{k}({L})f\big]^{B^{(k)}_{\tau}}$
   is supported on the ball  $B^{(k)}_{\tau}$, we apply Proposition~\ref{prop3.3} with $M>D+n$ and the H\"older inequality
   to see that $\|D_1\|_{p/2}$ is controlled by a constant multiple of
 \begin{eqnarray*}
   (1+|t|)^{n(1-{2\over p})} 2^{{n\over m}[k(m-1)-j](1-{2\over p})} \sum_{B_{\tau_1}^{(k)}\in {\mathscr R}_{k}}
 \sum_{\tau: \,  d( x^{(k)}_{\tau}, \, x_{\tau_1}^{(k)} ) \geq 10\cdot 2^{(m-1)k/m}(1+|t|)}
\left(1+{d(B_{\tau_1}^{(k)}, B_{\tau}^{(k)})\over 2^{(m-1)k/m}(1+|t|)}\right )^{-M}
\left\|
  \left[ \varphi_{k}({L})f\right]^{B^{(k)}_{\tau}}
 \right\|_{p}^p.
 \end{eqnarray*}
Changing the order of the summation for $\tau_1$ and $\tau$ and by \eqref{pengadd}, we obtain
  \begin{eqnarray*}
\|D_1\|_{p/2}
 &\leq& C    (1+|t|)^{n(1-{2\over p})} 2^{{n\over m}[k(m-1)-j](1-{2\over p})} \left\|
 \varphi_{k}({L})f
 \right\|_{p}^p.
 \end{eqnarray*}
For the term $D_2,$
we follow the  
similar approach as above in $D_1(x)$ to show that for every $\tau$ with
$\, 6\cdot 2^{(m-1)k/m}(1+|t|) \leq  d( x^{(k)}_{\tau}, x) \leq 10 \cdot 2^{(m-1)k/m}(1+|t|),$
 \begin{eqnarray*}
 \left\|A_{r_{B_j}}\left(
 \left|(I-e^{-r_{B_j}^mL})^Ke^{itL} \phi_{k}({L})
  \left[ \varphi_{k}({L})f\right]^{B^{(k)}_{\tau}}\right|^2 \right)\right\|_{p/2}
  \leq(1+|t|)^{n(1-{2\over p})} 2^{{n\over m}[k(m-1)-j](1-{2\over p})} \left\|
 \varphi_{k}({L})f
 \right\|_{p}^p,
\end{eqnarray*}
and so by \eqref{ll} in {\it Case 3}, we have that
$\|D_2\|_{p/2}\leq (1+|t|)^{n(1-{2\over p})} 2^{{n\over m}[k(m-1)-j](1-{2\over p})} \left\|
 \varphi_{k}({L})f
 \right\|_{p}^p.
$
This finishes the proof  of \eqref{ecc} and  thereby  \eqref{evv} for the term $III_{22}$ and concludes
 that
$$
\|III_2\|_{p}\leq  C(1+|t|)^{n({1\over 2}-{1\over p})}\left(\sum\limits_{k\geq 1} \|\varphi_k({L})  f \|_p^p\right)^{1/p}.
$$
Combining the estimates of $III_1(x)$ and $III_2(x)$, we obtain the estimate for $III(x)$ as desired.

Finally, we combine   estimates of $I, II $ and $ III $  to obtain the estimate \eqref{e3.7}, and complete
the proof of
Theorem~\ref{th1.1}.

\begin{proof}[Proof of Corollary~\ref{cor3.3}]
 The proof of Corollary~\ref{cor3.3}   can be obtained by making a minor modifications with
 \cite[Theorem 7.12]{O}, and we skip it here.
\end{proof}

 \medskip
We mention  that our Theorem~\ref{th1.1} can also apply to prove existence of solution (in $L^p$ spaces)
 to the Schr\"odinger equation with initial data $f$
in the domain of some power of the operator $L$. It can also be formulated in terms of generation of $C$-regularized groups.
We will not develop this here, we refer the reader to de Laubenfels \cite{DeL} and
 Ouhabaz's monograph \cite[Chapter  7]{O}.

\medskip

\section{An application to Riesz means of the solutions of the Schr\"odinger equations}
\setcounter{equation}{0}

The aim of this section is to prove Theorem~\ref{th1.2}.
Recall that when $L$ is the Laplacian on the Euclidean spaces ${\mathbb R}^n$,
the  Riesz mean  $I_s(t)(\Delta)$  in \eqref{RieszMean} was studied  by Sj\"ostrand \cite{Sj} . It was shown  that
$I_s(t)(\Delta)$ is uniformly bounded in $t\in{\mathbb R}\backslash\{0\}$ for
$s> n|{1/2}-{1/p}|$, and they are unbounded for  $s<n|{1/2}-{1/p}|.$ The result was generalized to Lie groups and Riemannian
manifolds by Lohoue\cite{Lo} and  by Alexopoulos \cite{A}.
 In the abstract setting
of operators on metric measure  space,   this result was extended by Carron, Coulhon and Ouhabaz \cite{CCO}
for operators with the Gaussian upper bounds, and by
Blunck \cite{Bl} for generalized Gaussian estimates for the operators.
More precisely,  the work of Blunck  \cite[Proposition A]{Bl}  shows that  under the assumption of
generalized Gaussian estimate  \eqref{GGE}  for some
  $1\leq p_0< 2$,  then the Riesz means operator $I_s(t)(L)$ is bounded on $L^p(X)$ uniformly for all $t\in{\mathbb R}\backslash \{0\},$
  $p\in (p_0, p'_0)$ and $s>  n|{1/ 2}-{1/ p}|$.
  To prove  the endpoint estimate for $s=  n|{1/ 2}-{1/ p}|,$
	we need following result.

	\begin{thm}\label{th6.1}
Suppose  that $(X, d, \mu)$ is  a  space of homogeneous type  with a dimension $n$.  Suppose that $L$
satisfies  \eqref{GGE}  for some
  $1\leq p_0< 2$.
Then for every $p\in (p_0, p'_0)$, there exists a  constant $C=C(n,p)>0$  such that for all $t\in {\mathbb R}\backslash \{0\},$
\begin{eqnarray} \label{e6.5}
 \left\| (I+|t|L)^{-s }e^{itL} f\right\|_{p} \leq C  \|f\|_{p},   \ \
  \ s\geq n\big|{1\over  2}-{1\over  p}\big|.
\end{eqnarray}

As a consequence,  this  estimate \eqref{e6.5} holds for all $1<p<\infty$ when the heat kernel of $L$ satisfies a  Gaussian upper bound
\eqref{GE}.
\end{thm}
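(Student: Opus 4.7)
The plan is to deduce Theorem~\ref{th6.1} from Theorem~\ref{th1.1} by a rescaling argument. For a fixed $t \in \mathbb{R}\setminus\{0\}$, I would introduce the rescaled metric $d_t := d/|t|^{1/m}$ on $X$ and the rescaled non-negative self-adjoint operator $\tilde L := |t|L$ on $L^2(X,\mu)$. Since $B_{d_t}(x,r) = B_d(x,|t|^{1/m} r)$ and the measure $\mu$ is untouched, the triple $(X, d_t, \mu)$ is again a space of homogeneous type with the same dimension $n$ and the same exponent $D$ as $(X, d, \mu)$: the doubling property, the strong homogeneity \eqref{e2.2}, and the bound \eqref{e200} all descend verbatim, with identical constants.

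Next I would verify that $\tilde L$ satisfies \eqref{GGE} (with the same $p_0$, $m$, $C$, $c$) relative to the rescaled space $(X, d_t, \mu)$. This is just a substitution: using $e^{-\tau \tilde L} = e^{-(\tau|t|)L}$, setting $s := \tau|t|$, and noting $\tau^{1/m}|t|^{1/m} = s^{1/m}$ together with $d_t(x,y)^m/\tau = d(x,y)^m/s$, the required estimate for $\tilde L$ at scale $\tau^{1/m}$ in the metric $d_t$ is literally the hypothesis \eqref{GGE} for $L$ at scale $s^{1/m}$ in the metric $d$. Hence all the hypotheses of Theorem~\ref{th1.1} hold for $\tilde L$ in $(X, d_t, \mu)$, with the same dimension $n$ and the same $p_0$.

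Applying Theorem~\ref{th1.1} to $\tilde L$ with time parameter $\tilde t := \operatorname{sgn}(t) \in \{\pm 1\}$ then yields, for every $p \in (p_0, p_0')$ and every $s \geq n|1/2-1/p|$,
$$
\bigl\|(I+\tilde L)^{-s} e^{i\tilde t \tilde L} f\bigr\|_p \leq C(n,p)\,(1+|\tilde t|)^s \|f\|_p = 2^s\, C(n,p)\, \|f\|_p.
$$
Since $\tilde t\,\tilde L = \operatorname{sgn}(t)\,|t|\,L = tL$ and $I+\tilde L = I+|t|L$, this is exactly \eqref{e6.5}. The Gaussian-kernel corollary follows immediately by taking $p_0=1$.

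The proof is essentially free of obstacles; the one point worth emphasizing is that the constant produced by Theorem~\ref{th1.1} depends only on the pair $(n,p)$, and all the structural parameters used by that theorem (the doubling constant, the dimensional exponent $n$ in \eqref{e2.2}, the exponent $D$ in \eqref{e200}, and the constants $C, c, m, p_0$ in \eqref{GGE}) are invariant under the rescaling $d \mapsto d/|t|^{1/m}$. Consequently the bound we obtain is genuinely uniform in $t \in \mathbb{R}\setminus\{0\}$, as claimed.
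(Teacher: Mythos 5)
Your proof is correct and takes a genuinely different, cleaner route than the paper. The paper proves Theorem~\ref{th6.1} by re-running the entire argument of Theorem~\ref{th1.1} with the Littlewood--Paley decomposition built from $\varphi_k(tL)$ in place of $\varphi_k(L)$, using the off-diagonal estimate of Proposition~\ref{prop13.33} in place of Proposition~\ref{prop3.3}, and leaving the details to the reader. You instead absorb the same dilation into the underlying structure --- $d \mapsto d/|t|^{1/m}$ and $L \mapsto |t|L$ --- and invoke Theorem~\ref{th1.1} as a black box with $\tilde t = \operatorname{sgn}(t)$. The reduction is legitimate precisely because every structural input to Theorem~\ref{th1.1} (the doubling constant, the exponents $n$ and $D$ in \eqref{e2.2}--\eqref{e200}, and the parameters $m$, $p_0$, $C$, $c$ in \eqref{GGE}) is preserved verbatim by the rescaling, as you verify, and the $L^p(X,\mu)$ norms in the conclusion do not refer to the metric, so the estimate transports back unchanged with a constant that is uniform in $t\in\mathbb{R}\setminus\{0\}$; the substitutions $\tilde t\,\tilde L = tL$, $I+\tilde L = I+|t|L$, and $(1+|\tilde t|)^s = 2^s$ close the argument. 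Your approach buys economy --- there is no need to redo a long technical proof --- while the paper's stays inside one fixed metric measure space, which is closer to the form in which the intermediate lemmas are stated and would be preferable if one wished to track more refined constant dependence.
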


\begin{proof}
We prove this theorem by following the approach  in the proof of  Theorem~\ref{th1.1}
 by using  Proposition~\ref{prop13.33} instead of Proposition~\ref{prop3.3}. For the details, we leave to the reader.
\end{proof}

 \medskip

\begin{proof}[Proof of Theorem~\ref{th1.2}] The proof of Theorem~\ref{th1.2} is inspired by the idea of \cite{Sj}.
Take  a function $\Phi\in C^\infty(\R)$ such that $\Phi(t)=0$ if $t<1/2$ and $\Phi(t)=1$ if $t>1$. Define function $F$ by
$$
F(u)=I_s(1)(u)-C_s\Phi(u)u^{-s}e^{-i u},
$$
where $C_s$ is defined by
$$
s\int_{-\infty}^1 (1-\lambda)^{s-1}e^{i\lambda u}d\lambda=C_s u^{-s}e^{iu}, \quad u>0.
$$
It is seen that for $0<u\leq 1$ and $k\in \mathbb{N}$,
$$
\frac{d^k}{d u^k}F(u)\leq C,
$$
and   for $u>1$ and $k\in \mathbb{N},$
$$
\frac{d^k}{d u^k}F(u)\leq Cu^{-k}.
$$
See   \cite[Lemma 2.1]{Sj}.
Hence,  for every $\beta>(n+1)/2$  we have that $
\sup_{R>0}\|\eta\delta_R F\|_{C^\beta}\leq C,
 $
 and so
 $
\sup_{R>0}\|\eta\delta_R F(t\cdot)\|_{C^\beta}\leq C
 $
with a constant $C>0$ independent of $t>0$. Then we apply (a) of Proposition~\ref{prop2.5}
to know that $F(tL)$ is bounded on $L^p(X)$ for all $p_0<p<p_0'$. Notice that for every $t>0,$
\begin{eqnarray} \label{vvc}
F(tL)=I_s(t)(L)-C_s\Phi(tL)(tL)^{-s}e^{-i tL}.
\end{eqnarray}
This yields  that for every $t>0,$
\begin{eqnarray} \label{vvcc}
\|I_s(t)(L)\|_{p\to p}&\leq& \|F(tL)\|_{p\to p}+C\|\Phi(tL)(tL)^{-s}e^{-itL}\|_{p\to p}\nonumber\\
&\leq& C+C\|\Phi(tL)(tL)^{-s}(1+tL)^s\|_{p\to p}\|(1+tL)^{-s} e^{-itL}\|_{p\to p}.
\end{eqnarray}
Applying (a) of Proposition~\ref{prop2.5} again, we have that $ \|\Phi(tL)(tL)^{-s}(1+tL)^s\|_{p\to p}  \leq C. $
This, in combination with  \eqref{e6.5} in Theorem~\ref{th6.1},  implies  $
\|I_s(t)(L)\|_{p\to p}\leq C
$
for $t>0$.

Since $I_{s}(t)(L)={\overline I}_{s}(-t)(L)$  for $t<0$, we have  that $\|I_s(t)(L)\|_{p\to p}\leq C
$
for $t<0$.
The proof of Theorem~\ref{th1.2} is complete.
\end{proof}

\medskip

 \bigskip
\noindent
{\bf Acknowledgements}:
The authors would like to thank the  referee for  helpful comments and suggestions.
P. Chen was supported by Guangdong Natural Science Foundation, Grant No.
2016A030313351.
 X.T. Duong was supported by  the Australian Research Council (ARC) through the research
grant DP190100970.
J.  Li was supported by the Australian Research Council (ARC) through the
research grant DP170101060 and by Macquarie University Research Seeding Grant.
L. Yan was supported by the NNSF of China, Grant
No. 11521101 and 11871480,  Guangdong
Special Support Program, and by the Australian Research Council (ARC) through the research
grant DP190100970. The authors  thank T.A. Bui, Z.H. Guo, E.M. Ouhabaz,  A. Sikora and X.H. Yao for useful discussions.

 \vskip 1cm

 \end{document}